\newtheorem{thm}{Theorem}[section]
\newtheorem{lem}[thm]{Lemma}
\newtheorem{cor}[thm]{Corollary}
\newtheorem{prop}[thm]{Proposition}
\newtheorem{rem}[thm]{Remark}
\DeclareMathAlphabet{\mathpzc}{OT1}{pzc}{m}{it}
\numberwithin{equation}{section}
\newcommand{\bqn}{\begin{equation}}
\newcommand{\eqn}{\end{equation}}
\newcommand{\bqnn}{\begin{equation*}}
\newcommand{\eqnn}{\end{equation*}}
\newcommand{\R}{\mathbb{R}}
\newcommand{\C}{\mathbb{C}}
\newcommand{\B}{D}
\newcommand{\ml}{\mathcal{L}}
\newcommand{\ve}{\varepsilon}
\newcommand{\rd}{\mathrm{d}}
\newcommand{\dhr}{\mathrel{\lhook\joinrel\relbar\kern-.8ex\joinrel\lhook\joinrel\rightarrow}} 
\title[A free boundary problem for MEMS with nonlinear bending]
{A free boundary problem modeling electrostatic MEMS:\\ II. nonlinear bending effects}
\author{Philippe Lauren\c{c}ot}
\address{Institut de Math\'ematiques de Toulouse, CNRS UMR~5219, Universit\'e de Toulouse \\ F--31062 Toulouse Cedex 9, France}
\email{laurenco@math.univ-toulouse.fr}
\author{Christoph Walker}
\address{Leibniz Universit\"at Hannover\\ Institut f\" ur Angewandte Mathematik \\ Welfengarten 1 \\ D--30167 Hannover\\ Germany}
\email{walker@ifam.uni-hannover.de}
\begin{document}

\date{\today}

\begin{abstract}
Well-posedness of a free boundary problem for electrostatic microelectromechanical systems (MEMS) is investigated when nonlinear bending effects are taken into account. The model describes the evolution of the deflection of an electrically conductive elastic membrane suspended above a fixed ground plate together with the electrostatic potential in the free domain between the membrane and the fixed ground plate. The electrostatic potential is harmonic in that domain and its values are held fixed along the membrane and the ground plate. The equation for the membrane deflection is a parabolic quasilinear fourth-order equation, which is coupled to the gradient trace of the electrostatic potential on the membrane.
\end{abstract}

\keywords{MEMS, free boundary problem, curvature, well-posedness, bending}
\subjclass[2010]{35R35, 47D06, 35M33, 35Q74, 35K93, 74M05}

\maketitle

\section{Introduction}\label{s1}

Microelectromechanical systems (MEMS) are miniaturized structures that combine logic elements and micromechanical components, often acting as sensors or actuators. These tiny devices enable a myriad of applications and are ubiquitous in a vast range of nowadays electronics like optical switches, micropumps, micromirrors, displays, or audio components. Idealized modern MEMS often consist of two components: a rigid ground plate and an electrically conductive thin elastic membrane that is held fixed along its boundary above the rigid plate. The design of such devices is based on the interaction between electrostatic and elastic forces. Indeed, applying a voltage difference between the two components generates a Coulomb force which induces displacements of the membrane and thus transforms electrostatic energy into mechanical energy. There is, however, an upper limit for the applied voltage beyond which the electrostatic force cannot be balanced by the elastic response of the membrane and the membrane then touches down on the rigid plate. This phenomenon is usually referred to as \emph{``pull-in''} instability or \emph{touchdown}. Estimating this pull-in instability threshold is an important issue in applications as it determines the optimal operating conditions of the MEMS device.

\medskip

The mathematical description of idealized MEMS devices involves the deflection of the deformable membrane above the ground plate and the electrostatic potential in between. From the energy balance for the membrane, one may derive the equation governing its dynamics, which involves the gradient of the potential on the membrane. As the potential is harmonic in the region between the ground plate and the membrane with given values on these two components, one is thus naturally led to a free boundary problem, see, for example, \cite{P02,BP03,PT01} and the references therein. However, most mathematical analysis so far has  been dedicated to simplified variants thereof for which we refer to the next section.
 
In this paper we investigate the free boundary problem and specifically take into account bending effects which result in a fourth-order equation for the membrane deflection. Moreover, different than in most research hitherto, which was restricted to small deformations from the outset, we shall not neglect curvature effects and hence obtain a quasilinear fourth-order equation for the membrane deflection. As pointed out in \cite{BrubakerPelesko_EJAM}, retaining gradient terms might affect the value of the pull-in voltage and is thus important in applications. We will be more precise in the following section, where we derive the model. Our main results regarding the local and global well-posedness in dependence on the applied voltage difference $\lambda$ are stated in Section~\ref{s3}, where we also present the existence of steady-state solutions for small voltage values. The corresponding proofs are then contained in the subsequent sections.

\section{Derivation of the Model}\label{s2}

We begin this section with a review of the free boundary model for electrostatic MEMS, when bending effects are taken into account. We basically follow the derivation performed in \cite[Chapter~7]{BP03} but without the \textit{a priori} assumption of small deformations. 

\subsection{The Model}

\begin{figure}
\centering\includegraphics[scale=.7]{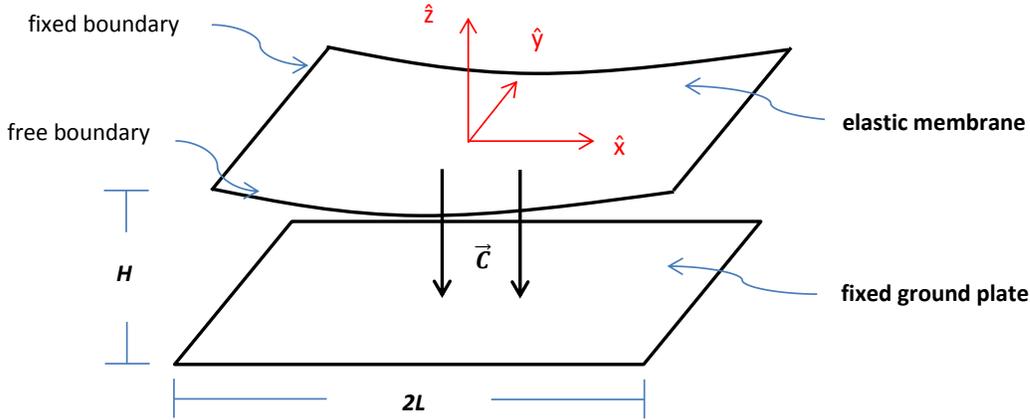}
\caption{\small Idealized electrostatic MEMS device.}\label{MEMS1b}
\end{figure}

We consider a rectangular thin elastic membrane that is coated with a thin dielectric film and suspended above a rigid plate. The $(\hat{x},\hat{y},\hat{z})$-coordinate system is chosen such that the ground plate of dimension $[-L,L]\times [-l,l]$ in the $(\hat{x},\hat{y})$-direction is located at $\hat{z}=-H$, while the undeflected membrane with the same dimension $[-L,L]\times [-l,l]$ in the $(\hat{x},\hat{y})$-direction is located at $\hat{z}=0$. The membrane is held fixed along the edges in the $\hat{y}$-direction while the edges in the $\hat{x}$-direction are free. The situation is illustrated in Figure~\ref{MEMS1b}. Assuming homogeneity in the $\hat{y}$-direction, the membrane may thus be considered as an elastic strip and the $\hat{y}$-direction is omitted in the sequel. Holding the strip at potential $V$ while the rigid plate is grounded induces a Coulomb force across the device which causes a mechanical deflection of the strip. We let $\hat{u}=\hat{u}(\hat{t},\hat{x}) >-H$ denote the deflection of the strip at the point $\hat{x}\in (-L,L) $ and time $\hat{t}$, and we let $\hat{\psi}=\hat{\psi}(\hat{t},\hat{x} ,\hat{z})$ denote the electrostatic potential at the point $(\hat{x},\hat{z}) $ and time $\hat{t}$. We suppress time $\hat{t}$ for the moment.

\medskip

The electrostatic potential $\hat{\psi}$ is harmonic in the region
$$
\hat{\Omega}(\hat{u}):=\left\{(\hat{x},\hat{z})\,;\, -L<\hat{x}<L\,,\, -H<\hat{z}<\hat{u}(\hat{x})\right\}
$$
between the ground plate and the strip, that is,
\begin{equation}\label{psihat}
\Delta\hat{\psi}=0 \quad\text{in}\quad \hat{\Omega}(\hat{u})\ ,
\end{equation}
and is subject to the boundary conditions
\begin{equation}\label{psihatbc}
\hat{\psi}(\hat{x},-H)=0\ ,\quad \hat{\psi}(\hat{x},\hat{u}(\hat{x}))= V\ ,
 \qquad \hat{x}\in(-L,L)\ .
\end{equation}
The {\it electrostatic energy} $\hat{E}_e$ in dependence of the deflection $\hat{u}$  is given by
\begin{equation*} 
\hat{E}_e(\hat{u})=-\frac{\epsilon_0}{2}\int_{-L}^L\int_{-H}^{\hat{u}(\hat{x})}\vert\nabla\hat{\psi}(\hat{x},\hat{z})\vert^2\,\,\rd\hat{z}\,\rd \hat{x}\,\ ,
\end{equation*} 
with $\epsilon_0$ being the permittivity of free space.  The \textit{surface energy} $\hat{E}_s$ in dependence of the deflection $\hat{u}$ is proportional to the tension $T$ and to the change of arc length of the strip, i.e.
\begin{equation*} 
\hat{E}_s(\hat{u})=T\int_{-L}^L  \left(\sqrt{1+(\partial_{\hat{x}}\hat{u}(\hat{x}))^2}-1\right)\, \rd \hat{x}\ .
\end{equation*}
Letting $Y$ denote Young's modulus and $I$ the momentum of inertia, the \textit{bending energy}~$\hat{E}_b$ is
\begin{equation*} 
\hat{E}_b(\hat{u})=\frac{YI}{2}\int_{-L}^L  \left\vert\partial_{\hat{x}}\left(\frac{\partial_{\hat{x}}\hat{u}(\hat{x})}{\sqrt{1+(\partial_{\hat{x}}\hat{u}(\hat{x}))^2}}\right)\right\vert^2\, \sqrt{1+(\partial_{\hat{x}}\hat{u}(\hat{x}))^2}\,\rd \hat{x}\ ,
\end{equation*}
where
$$
\partial_{\hat{x}}\left(\frac{\partial_{\hat{x}}\hat{u}}{\sqrt{1+(\partial_{\hat{x}}\hat{u})^2}}\right)=\frac{\partial_{\hat{x}}^2\hat{u}}{\left(1+(\partial_{\hat{x}}\hat{u})^2\right)^{3/2}}
$$
is the curvature of the graph of $\hat{u}$ and $\rd s=\sqrt{1+(\partial_{\hat{x}}\hat{u}(\hat{x}))^2}\rd \hat{x}$ its arc length element. The \textit{total energy} of the system is then the sum $\hat{E}(\hat{u}) := \hat{E}_e(\hat{u}) + \hat{E}_s(\hat{u}) + \hat{E}_b(\hat{u})$. Finally, the strip is clamped at its ends, so that $\hat{u}(\pm L) = \partial_{\hat{x}} \hat{u}(\pm L) = 0$.

\medskip

We next introduce the dimensionless variables
$$
x=\frac{\hat{x}}{L}\ ,\quad z=\frac{\hat{z}}{H}\ ,\quad u=\frac{\hat{u}}{H}\ ,\quad \psi = \frac{\hat{\psi}}{V}\ ,
$$
and denote the aspect ratio of the device by $\ve:=H/L$. In these dimensionless variables, we may then write the total energy $E$ in dependence of the deflection $u$ in the form
\begin{equation}\label{E}
\begin{split}
E(u) = &\frac{YI\ve^2}{2L}\int_{-1}^1  \left\vert\partial_{x}\left(\frac{\partial_{x}u(x)}{\sqrt{1+\ve^2(\partial_{x}u(x))^2}}\right) \right\vert^2\, \sqrt{1+\ve^2(\partial_{x}u(x))^2}\,\rd x \\
& + TL\int_{-1}^1  \left(\sqrt{1+\ve^2(\partial_{x}u(x))^2}-1\right)\, \rd x \\
& - \frac{\epsilon_0 V^2}{2\ve}\int_{\Omega(u)}\left(\ve^2\vert\partial_{x}\psi(x,z)\vert^2+\vert\partial_z\psi(x,z)\vert^2\right)\,\rd (x,z)\ ,
\end{split}
\end{equation} 
with
$$
\Omega(u)=\left\{(x,z)\,;\, -1<x<1\,,\, -1<z<u(x)\right\}\ .
$$
The equilibrium configurations of the device are the critical points of the total energy $E$ and are given by the solutions to the corresponding Euler-Lagrange equation reading
\begin{equation}\label{EL}
\begin{split}
0=\ & -\ve^2\,\beta\, \partial_{x}^2\left(\frac{\partial_{x}^2u}{(1+\ve^2(\partial_{x}u)^2)^{5/2}}\right)- \frac{5}{2}\,\ve^4\,\beta\, \partial_{x} \left( \frac{\partial_xu(\partial_{x}^2u)^2}{(1+\ve^2(\partial_{x}u)^2)^{7/2}} \right)\\
&+\ve^2\,\tau\, \partial_{x}\left(\frac{\partial_{x}u}{(1+\ve^2(\partial_{x}u)^2)^{1/2}}\right)
-\ve^2\,\lambda \left(\ve^2\vert\partial_{x}\psi(x,u(x))\vert^2+\vert\partial_z\psi(x,u(x))\vert^2\right)
\end{split}
\end{equation}
for $x\in (-1,1)$, where we have set
$$
\beta:= \frac{YI}{L}\ ,\qquad
\tau:=TL\ ,\qquad
\lambda=\lambda(\ve):=\frac{\epsilon_0V^2}{2\ve^3}\ .
$$
While the derivations of the first three terms in \eqref{EL} from the bending and stretching energies in \eqref{E} follow by classical arguments, the derivation of the last term in \eqref{EL} from the electrostatic energy in \eqref{E} is more involved and relies on shape optimization techniques, see \cite[Section~5.3]{HP05} for instance (recall that $\psi$ depends non-locally on $u$ according to \eqref{psihat}).

\medskip

For the dynamics of the membrane deflection $u=u(\hat{t},x)$, it follows from Newton's second law that the inertia $\rho h_0 \partial_{\hat{t}}^2 u$ (with $\rho$ and $h_0$ denoting, respectively, the membrane's mass density per unit volume and thickness) balances the elastic and electrostatic forces, given by the right hand side of equation~\eqref{EL}, and we further account for a damping force $a\partial_{\hat{t}} u$ which is linearly proportional to the velocity. Scaling time based on the strength of damping according to $t=\hat{t}\ve^2/a$, setting $\gamma^2:=\rho h_0 \ve^2/a^2$, and introducing the quasilinear fourth-order operator 
\begin{equation}\label{AA}
\begin{split}
\mathcal{K}(u):=\beta\, & \partial_{x}^2\left(\frac{\partial_{x}^2u}{(1+\ve^2(\partial_{x}u)^2)^{5/2}}\right) +  \frac{5}{2}\,\ve^2\,\beta\, \partial_{x}\left(\frac{\partial_xu(\partial_{x}^2u)^2}{(1+\ve^2(\partial_{x}u)^2)^{7/2}}\right)\\
&
-\tau\, \partial_{x}\left(\frac{\partial_{x}u}{(1+\ve^2(\partial_{x}u)^2)^{1/2}}\right)
\end{split}
\end{equation}
allow us to write the damping dominated evolution for the strip deflection in the form
\begin{equation}\label{u}
\begin{split}
\gamma^2 \partial_t^2 u + \partial_t u+ \mathcal{K}(u)= - \lambda \left( \ve^2 \vert \partial_{x} \psi(t,x,u(t,x)) \vert^2 + \vert\partial_z\psi(t,x,u(t,x)) \vert^2\right)
\end{split}
\end{equation}
for $t>0$ and $x\in (-1,1)$, subject to the clamped boundary conditions
\begin{equation}\label{bcu}
u(t,\pm 1)= \partial_x u(t,\pm 1)=0\ ,\quad t>0\ ,
\end{equation}
and the initial condition
\begin{equation}\label{ic}
u(0,x)=u^0(x)\ ,\quad x\in (-1,1)\ .
\end{equation}
Equations \eqref{psihat}-\eqref{psihatbc} in dimensionless variables read
\begin{equation}\label{psi}
\ve^2 \,\partial_x^2\psi+\partial_z^2\psi=0 \ ,\quad (x,z)\in \Omega(u(t))\ ,\quad t>0\ ,
\end{equation}
subject to the boundary conditions (linearly extended on the lateral boundaries)
\begin{equation}\label{psibc}
\psi(t,x,z)=\frac{1+z}{1+u(t,x)}\ ,\quad (x,z)\in\partial\Omega(u(t))\ ,\quad t>0\ .
\end{equation}
The situation for \eqref{psi}-\eqref{psibc} is depicted in Figure~\ref{fig1}. Let us emphasize here that the above model is only meaningful as long as the strip does not touch down on the ground plate, that is, the deflection $u$ satisfies $u>-1$. This fact not only shows up in the definition of $\Omega(u)$ which becomes disconnected if $u$ reaches the value $-1$ at some point, but also in the right hand side of \eqref{u} which becomes singular at such points since $\psi=1$ along $z=u(x)$ with $\psi=0$ at $z=-1$. This singularity is somehow tuned by the parameter $\lambda$, which is proportional to the square of the applied voltage difference and actually governs the global well-posedness and existence of steady-state solutions for Equations \eqref{u}-\eqref{psibc}. More precisely, it is supposed that, above a certain critical threshold of $\lambda$, solutions to \eqref{u}-\eqref{psibc} cease to exist globally in time and that there are no longer steady-state solutions. 

Before stating our results on the well-posedness of Equations \eqref{u}-\eqref{psibc}, we first consider two simplified versions thereof for which some of the just mentioned physically plausible features are known to hold.

\begin{figure}
\centering\includegraphics[width=10cm]{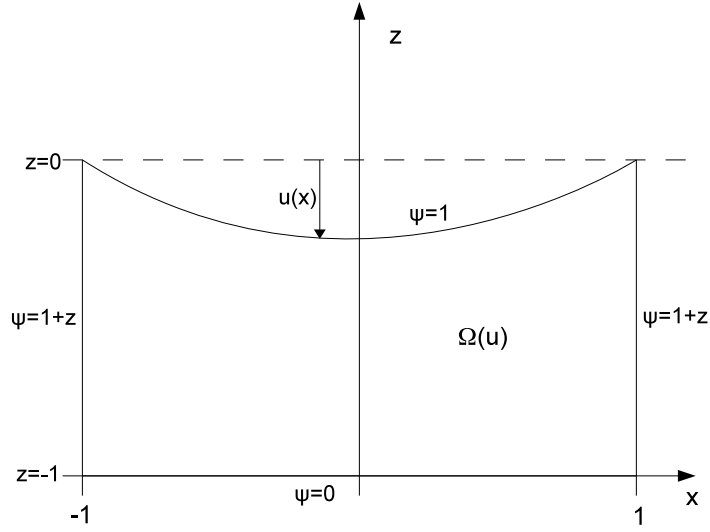}
\caption{\small Idealized electrostatic MEMS device in one dimension.}\label{fig1}
\end{figure}

\subsection{Small Deformation Model} 

Models for MEMS taking into account bending and thus including fourth-order operators in space, have mainly been investigated  with linearized curvature terms, which correspond to the \textit{a priori} assumption of small deformations. In this case, stretching and bending energies are replaced with
\begin{equation*} 
{\hat E}_s(\hat{u}) = \frac{T}{2} \int_{-L}^L  \left \vert \partial_{\hat{x}} \hat{u}(\hat{x}) \right \vert^2\, \rd \hat{x}\ ,\qquad {\hat E}_b(\hat{u}) = \frac{YI}{2} \int_{-L}^L  \left \vert \partial_{\hat{x}}^2 \hat{u}(\hat{x}) \right \vert^2\,\rd \hat{x}\ ,
\end{equation*}
and the resulting dimensionless evolution problem~\eqref{u} reduces to a nonlocal semilinear equation
\begin{equation}\label{ulinear}
\begin{split}
\gamma^2 \partial_t^2 u + \partial_t u+ \beta\, \partial_{x}^4 u -\tau\, \partial_x^2u = - \lambda \left( \ve^2 \vert \partial_{x} \psi(t,x,u(t,x)) \vert^2 + \vert \partial_z \psi(t,x,u(t,x)) \vert^2 \right)
\end{split}
\end{equation}
for $t>0$ and $x\in (-1,1)$. In \cite{LW_AnalPDE}, a rather comprehensive investigation of existence and nonexistence issues for this small deformation model may be found. We also refer to \cite{ELW1,Ci07,LaurencotWalker_ARMA} for similar results for the second-order case without bending and inertia, i.e. when $\beta=\gamma=0$.

\subsection{Small Aspect Ratio Model} 

A common assumption made in the mathematical analysis hitherto is a vanishing aspect ratio $\varepsilon=H/L$ that reduces the free boundary problem to a single equation with a right hand side involving a singularity when the strip touches down on the ground plate. More precisely, setting formally $\varepsilon=0$ allows one to solve \eqref{psi}-\eqref{psibc} explicitly for the potential $\psi=\psi_0$, that is,
\begin{equation}\label{psi0}
\psi_0(t,x,z)=\frac{1+z}{1+u_0(t,x)}\ ,\quad (t,x,z)\in [0,\infty)\times (-1,1)\times(-1,0)\ ,
\end{equation}
where the displacement $u=u_0$ now satisfies 
the so-called small aspect ratio model 
\begin{equation}\label{z0}
\begin{array}{rlll}
\gamma^2 \partial_t^2 u_0 + \partial_t u_0 +\beta\partial_x^4 u_0- \tau\partial_x^2 u_0 &\!\!\!=\!\!\!& - \displaystyle{\frac{\lambda}{(1+u_0)^2}}\,,\quad & x\in (-1,1)\,, \quad t>0\,, \\
u_0(t,\pm1) &\!\!\!=\!\!\!& 0\,, &   t>0\,, \\
u_0(0,x)&\!\!\!=\!\!\!& u^0(x)\,, & x\in (-1,1)\, .
\end{array}
\end{equation}
We shall also point out that, besides clamped boundary conditions, other boundary conditions for $u$ have been considered  in the linear case, e.g. pinned boundary conditions
$$
u(t,\pm 1)= \partial_x^2 u(t,\pm 1)=0\ ,\quad t>0\ .
$$
For details and the present state of the art on this small gap model we refer to \cite{EspositoGhoussoubGuo, Gu10, GW09, KLNT11, LinYang, LW_AHP, LL12} and the references therein.

\begin{rem}\label{rehp}
The above mentioned models \eqref{u}-\eqref{psibc}, \eqref{bcu}-\eqref{ulinear}, and \eqref{z0} are quasilinear or semilinear hyperbolic equations with a singular right hand side. When viscous or damping forces dominate over inertial forces, a commonly made assumption is to neglect second-order time derivatives and set $\gamma=0$. Then \eqref{u}-\eqref{psibc}, \eqref{bcu}-\eqref{ulinear}, and \eqref{z0} become parabolic equations and most of the mathematical analysis performed so far is devoted to this particular case. However, the case $\gamma>0$ has been studied in a few papers, see \cite{CFT,Gu10,KLNT11,LW_AHP}.
\end{rem}

\section{Main Results}\label{s3}

The main difficulties in studying Equations \eqref{u}-\eqref{psibc} lie in the non-local and singular dependence of the electrostatic potential $\psi$ on the membrane deflection $u$ together with the quasilinear structure of the operator $\mathcal{K}$ for \eqref{AA}. In combination with the hyperbolic term $\gamma^2\partial_t^u$, the well-posedness seems far from being obvious.
We thus assume from now on that damping forces are much stronger than inertial forces and neglect the latter by setting $\gamma=0$. To shorten notation, we let $I:=(-1,1)$.

\begin{thm}[{\bf Local and Global Well-Posedness}]\label{Aquasi}
Let $\gamma=0$. Consider an initial condition $u^0\in  H^{4}(I)$ satisfying the boundary conditions $u^0(\pm 1)=\partial_x u^0(\pm 1)=0$ such that $u^0(x)>-1$ for $x\in I$. Then, the following are true:

\begin{itemize}

\item[(i)] For each voltage value $\lambda>0$, there is a unique solution $(u,\psi)$ to \eqref{u}-\eqref{psibc} on the maximal interval of existence $[0,T_m)$ in the sense that
$$
u\in C^1\big([0,T_m),L_2(I)\big)\cap  C\big([0,T_m), H^{4}(I)\big)
$$
satisfies \eqref{u}-\eqref{ic} together with
$$
u(t,x)>-1\ ,\quad (t,x)\in [0,T_m)\times I\ , 
$$ 
and $\psi(t)\in H^2\big(\Omega(u(t))\big)$  solves \eqref{psi}-\eqref{psibc} in $\Omega(u(t))$ for each $t\in [0,T_m)$. 

\item[(ii)] If, for each $T>0$, there is $\kappa(T)\in (0,1)$ such that $$\|u(t)\|_{H^{4}(I)}\le \kappa(T)^{-1}\ ,\qquad u(t)\ge -1+\kappa(T)\ \text{ in } I$$ for $t\in [0,T_m)\cap [0,T]$, then the solution exists globally in time, that is, $T_m=\infty$.

\item[(iii)]  Given $\kappa\in (0,1)$, there are $\lambda_*(\kappa)>0$ and $M(\kappa)>0$ such that the solution exists globally in time provided that $\lambda\in (0,\lambda_*(\kappa))$ and $\|u^0\|_{H^{4}(I)}\le M(\kappa)$. Moreover, in this case  $u\in L_\infty(0,\infty;H^{4}(I))$ with $u(t)\ge -1+\kappa$ in $I$ for $t\ge 0$. 
\end{itemize}
\end{thm}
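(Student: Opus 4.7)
The three parts of Theorem~\ref{Aquasi} will be tackled by (i) local existence from a fixed-point argument based on the analytic semigroup generated by the linearization of $\mk$, (ii) a standard continuation argument using the uniform existence time obtained in~(i), and (iii) a global small-data / small-voltage existence via a fixed-point on a bounded, invariant set.

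For part~(i), I would fix $v\in H^4(I)$ with clamped boundary conditions and $v>-1$. Since $H^4(I)\hookrightarrow C^2(\bar I)$, the linearization of the principal part of $\mk$ at $v$,
\begin{equation*}
\mathcal{A}(v)w := \beta \partial_x^2\!\left( \frac{\partial_x^2 w}{(1+\ve^2(\partial_x v)^2)^{5/2}} \right) + \text{(lower-order terms in $w$ with coefficients depending on $v$)},
\end{equation*}
is uniformly strongly elliptic of fourth order with sufficiently regular coefficients. Together with the clamped boundary conditions it generates an analytic semigroup on $L_2(I)$ with domain contained in $H^4(I)$ and enjoys maximal $L_p$-regularity. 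The nonlinear right-hand side $g(u) := -\lambda(\ve^2|\partial_x\psi|^2+|\partial_z\psi|^2)|_{z=u(x)}$ is non-local through $\psi=\psi(u)$, which solves \eqref{psi}--\eqref{psibc} on the moving domain $\Om(u)$. I would pull this problem back to the fixed rectangle $I\times(-1,0)$ by the transformation $(x,z)\mapsto (x,(z+1)/(1+u(x))-1)$, converting it into an elliptic problem on a fixed domain with coefficients depending smoothly on $u$; elliptic regularity then yields local Lipschitz dependence of $g$ on $u$ in Sobolev norms compatible with the chosen solution space, and a contraction argument in $C([0,T];H^4(I))$ for small $T>0$ produces a unique local solution on a maximal interval $[0,T_m)$. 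Part~(ii) follows at once, since the existence time in~(i) can be bounded below solely in terms of $\|u^0\|_{H^4}$ and the positive lower bound of $u^0+1$; uniform control of both quantities on $[0,T_m)$ then rules out $T_m<\infty$ by the usual contradiction argument.

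For part~(iii), the linearization of $\mk$ around $u\equiv 0$ is the constant-coefficient operator $\mathcal{A}_0 w := \beta\partial_x^4 w - \tau\partial_x^2 w$ with clamped boundary conditions, which generates an exponentially decaying analytic semigroup on $L_2(I)$ with rate $\omega>0$ coming from the positive spectral gap of $\mathcal{A}_0$. Writing the problem with $\gamma=0$ as
\begin{equation*}
\partial_t u + \mathcal{A}_0 u = \mathcal{R}(u) + g(u),\qquad u(0)=u^0,
\end{equation*}
with $\mathcal{R}(u) := \mathcal{A}_0 u - \mk(u)$ collecting the quasilinear correction (which is at least cubic in $u$ to leading order), one obtains estimates of the form $\|\mathcal{R}(u)\|_{L_2} \leq C\ve^2\|u\|_{H^4}^3$ and $\|g(u)\|_{L_2} \leq C\lambda/\kappa^2$ whenever $\|u\|_{H^4}$ is bounded and $u\geq -1+\kappa$. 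On the complete metric space
\begin{equation*}
X := \left\{ u \in C([0,\infty);H^4(I)) : \sup_{t\geq 0}\|u(t)\|_{H^4}\leq M(\kappa),\ u(t)\geq -1+\kappa \text{ on } I,\ u(0)=u^0 \right\},
\end{equation*}
I would show that the map sending $v\in X$ to the mild solution of $\partial_t u+\mathcal{A}_0 u = \mathcal{R}(v)+g(v)$ with initial datum $u^0$ is a self-map of $X$ and a contraction in a weaker norm (such as that of $C([0,\infty);H^2(I))$), provided $M(\kappa)$, $\|u^0\|_{H^4}$ and $\lambda$ are chosen small enough in terms of $\kappa$ and $\omega$. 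The unique fixed point is the claimed global solution, and the uniform $H^4$-bound and distance from $-1$ are built into the set $X$.

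The main technical obstacle throughout is the quantitative, uniform control of the map $u\mapsto g(u)$: establishing locally Lipschitz (and ideally smooth) dependence of the gradient trace $(\partial_x\psi,\partial_z\psi)|_{z=u(x)}$ on $u$ in strong enough Sobolev norms, uniformly on sets where $u$ remains bounded in $H^4(I)$ and bounded away from $-1$, is the decisive ingredient on which all three parts of the theorem hinge. This is presumably the focus of the preparatory analysis of the elliptic transmission problem \eqref{psi}--\eqref{psibc} carried out via the pull-back procedure described above.
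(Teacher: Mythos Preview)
Your identification of the map $u\mapsto g(u)$ as the decisive technical ingredient, and the pull-back of \eqref{psi}--\eqref{psibc} to a fixed rectangle, are exactly how the paper proceeds (Proposition~\ref{L1}); the overall architecture is also the same.

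The genuine gap is in the decomposition you choose for the fixed point, most visibly in part~(iii). Writing $\mathcal{K}(u)=\mathcal{A}_0 u-\mathcal{R}(u)$ and moving $\mathcal{R}(u)$ to the right-hand side puts a \emph{fourth-order} term into the forcing: $\mathcal{R}(u)$ contains $\beta\bigl(1-(1+\ve^2(\partial_x u)^2)^{-5/2}\bigr)\,\partial_x^4 u$, which for $u\in H^4(I)$ lies only in $L_2(I)$, not in any $H^{4\sigma}(I)$ with $\sigma>0$. Since $\|e^{-(t-s)\mathcal{A}_0}\|_{\mathcal{L}(L_2,H^4)}\sim (t-s)^{-1}$, the Duhamel integral $\int_0^t e^{-(t-s)\mathcal{A}_0}\mathcal{R}(v(s))\,\rd s$ does not converge in $H^4(I)$, so your map need not take $X$ into itself. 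The cubic smallness $\|\mathcal{R}(u)\|_{L_2}\le C\ve^2\|u\|_{H^4}^3$ shrinks the integrand but does not remove the non-integrable singularity at $s=t$. The same derivative-loss obstruction already affects part~(i) if one freezes the principal part at a fixed $v$ and attempts the contraction in $C([0,T];H^4(I))$ as you indicate.

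The paper avoids this by a different splitting $\mathcal{K}(u)=A(u)u+h(u)$: the full quasilinear fourth-order part $A(w)v=\beta\,\partial_x^2\bigl((1+\ve^2(\partial_x w)^2)^{-5/2}\partial_x^2 v\bigr)-\tau\,\partial_x\bigl((1+\ve^2(\partial_x w)^2)^{-1/2}\partial_x v\bigr)$ stays on the left, and only the cubic \emph{third-order} remainder $h(u)$ (Lemma~\ref{hh}), which maps $H^4(I)$ into $H^1(I)$, is moved to the right. The linear problem $\partial_t u+A(w(t))u=F$ with time-dependent $w$ is then handled via parabolic \emph{evolution operators} $U_{A(w)}(t,s)$ (Proposition~\ref{ES}) rather than a fixed semigroup, and the fixed point is run on H\"older-in-time sets $\mathcal{W}_T(\kappa)\subset C([0,T];H_D^{4\theta}(I))$ with $4\theta\in(7/2,4)$. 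Because $g(v),h(v)\in H_D^{4\sigma}(I)$ with $\sigma>0$, the kernel exponent $\sigma-\theta>-1$ is integrable and the map closes. The exponential decay that drives part~(iii) comes from a spectral bound for $A(w)$ that is \emph{uniform} over $w\in\overline{S}_\theta(\kappa)$ (Lemma~\ref{SG}), not merely from $\mathcal{A}_0=A(0)$; with this, the very same contraction works for every $T>0$ once $\lambda$ and $\|u^0\|_{H^{4\xi}}$ are small enough.
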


Theorem~\ref{Aquasi} is a somewhat paraphrased version of our actual results. We refer to the next section, in particular, to Proposition~\ref{Alocal} and Corollary~\ref{Aglobalc}, for more precise statements under weaker assumptions. Note that part~(iii) of this theorem provides global solutions for small~$\lambda$, which do not touch down on the ground plate, not even in infinite time.

\begin{rem} \label{R1a} 
In contrast to the semilinear case \eqref{bcu}-\eqref{ulinear} treated in \cite{LW_AnalPDE}, where only linear bending effects were taken into account so that the operator $\mathcal{K}$ defined in \eqref{AA} reduces to a linear operator, the global existence issue in Theorem~\ref{Aquasi} requires an additional smallness condition on $u^0$ in the $H^{4}(I)$-norm. This seems to be natural since in the quasilinear model \eqref{u}-\eqref{psibc}, the deflection may cease to be a graph for large times in which case the model itself is no longer valid. Therefore, the occurrence of a finite time singularity in the quasilinear model could correspond to a touchdown of the strip on the ground plate or a blowup of some norm of $u$. This is not the case in the semilinear model, where a finite time singularity always corresponds to a touchdown of the strip on the ground plate as shown in \cite{LW_AnalPDE}.
\end{rem}

Remarkably, when inertia and bending is neglected (i.e. $\beta=\gamma=0$) and $\mathcal{K}$ thus reduces to a (quasilinear) second-order operator, solutions to \eqref{u}-\eqref{psibc} cease to exist globally in time for large values of~$\lambda$, see \cite{ELW1,ELW3}. 

\medskip

Next, we consider steady-state solutions:

\begin{thm}[{\bf Steady-State Solutions}]\label{TStable3}
There is $\lambda_s=\lambda_s(\ve)>0$ such that for each $\lambda\in (0,\lambda_s)$ there exists a locally asymptotically stable steady state $(U_\lambda,\Psi_\lambda)$ to \eqref{u}-\eqref{psibc} with $U_\lambda\in H^4(I)$ and  $\Psi_\lambda\in H^2(\Omega(U_\lambda))$ satisfying $-1<U_\lambda< 0$ in $I$.
\end{thm}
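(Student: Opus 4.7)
The plan is to construct the branch $\lambda\mapsto U_\lambda$ via the implicit function theorem anchored at the trivial steady state $(U,\lambda)=(0,0)$, and then to deduce local asymptotic stability from the principle of linearized stability for the quasilinear parabolic problem already set up for Theorem~\ref{Aquasi}.

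First, I would reformulate the stationary version of \eqref{u}-\eqref{psibc} as a single equation on
\begin{equation*}
X := \{V\in H^4(I)\,:\, V(\pm 1)=\partial_x V(\pm 1)=0,\ V>-1\text{ on }\bar I\},
\end{equation*}
by setting
\begin{equation*}
F(\lambda,U) := \mathcal{K}(U)+\lambda\, g(U),\qquad g(U)(x):=\ve^2|\partial_x\psi_U(x,U(x))|^2+|\partial_z\psi_U(x,U(x))|^2,
\end{equation*}
where $\psi_U\in H^2(\Omega(U))$ is the unique solution of the transmission-like elliptic problem \eqref{psi}-\eqref{psibc} on the $U$-dependent domain $\Omega(U)$. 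Smoothness of the map $U\mapsto(\psi_U,g(U))$ in suitable norms is established by flattening the domain via the change of variable $(x,z)\mapsto(x,(1+z)/(1+U(x))-1)$ mapping $\Omega(U)$ onto the fixed rectangle $I\times(-1,0)$, a device which should already be available from the local well-posedness analysis.

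Second, I would verify the hypotheses of the implicit function theorem at $(\lambda,U)=(0,0)$. We have $F(0,0)=0$ since $\mathcal{K}(0)=0$, and the Fréchet derivative is
\begin{equation*}
D_U F(0,0) = \mathcal{K}'(0) = \beta\,\partial_x^4-\tau\,\partial_x^2,
\end{equation*}
because the nonlinear corrections in \eqref{AA} vanish to second order at $U=0$. With the clamped boundary conditions encoded in $X$, this is a symmetric positive definite isomorphism from $X_0:=\{V\in H^4(I)\,:\, V(\pm 1)=\partial_x V(\pm 1)=0\}$ onto $L_2(I)$. The implicit function theorem therefore yields $\delta>0$ and a smooth curve $\lambda\mapsto U_\lambda\in X_0$ on $(-\delta,\delta)$ with $F(\lambda,U_\lambda)=0$ and $U_0=0$; continuity of the curve guarantees $U_\lambda>-1$ for $\lambda$ small. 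Since $g(0)\equiv 1$, the right-hand side $-\lambda g(U_\lambda)$ is strictly negative for $\lambda\in(0,\delta)$ (after shrinking $\delta$), and the positivity of the Green's function of the clamped biharmonic operator $\beta\partial_x^4-\tau\partial_x^2$ (together with an expansion $U_\lambda=-\lambda\,\mathcal{K}'(0)^{-1}g(0)+o(\lambda)$) yields $U_\lambda<0$ on $I$ for $\lambda$ in some interval $(0,\lambda_s)$. The associated potential $\Psi_\lambda:=\psi_{U_\lambda}$ lies in $H^2(\Omega(U_\lambda))$ by construction.

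Third, for local asymptotic stability I would apply the principle of linearized stability for abstract quasilinear parabolic equations. Writing the evolution problem \eqref{u}-\eqref{psibc} with $\gamma=0$ in the form $\partial_t u+\mathcal{A}(u)u = f(u)$ on $L_2(I)$ with domain $X_0$, the linearization at $U_\lambda$ is a compact perturbation of $\beta\partial_x^4-\tau\partial_x^2$ whose size tends to zero as $\lambda\to 0^+$. Since the unperturbed operator has compact resolvent and spectrum bounded below by its smallest (positive) eigenvalue $\mu_1>0$, a standard spectral perturbation argument yields, after possibly shrinking $\lambda_s$, a uniform spectral gap $\mathrm{Re}\,\sigma\geq\mu_1/2>0$ for $\lambda\in(0,\lambda_s)$. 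The well-posedness framework underlying Theorem~\ref{Aquasi} then delivers local exponential decay in $H^4(I)$ of any solution starting close to $U_\lambda$, which is precisely local asymptotic stability.

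The main obstacle, and the step that carries most of the technical weight, is verifying the smoothness of the Dirichlet-to-Neumann style map $U\mapsto g(U)$ as a $C^1$ (or better) map between the appropriate function spaces on the moving domain $\Omega(U)$, and then controlling the linearization of this map uniformly in $\lambda$ to push through both the implicit function theorem and the spectral perturbation step. Once the pull-back to the fixed rectangle and the associated elliptic estimates from the well-posedness proof are in hand, however, both steps reduce to a routine application of standard machinery.
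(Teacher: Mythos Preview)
Your proposal is correct and follows essentially the same route as the paper: implicit function theorem at $(\lambda,U)=(0,0)$ to produce the branch, sign preservation for the clamped operator $\beta\partial_x^4-\tau\partial_x^2$ to obtain $U_\lambda<0$, and the principle of linearized stability via a spectral perturbation argument showing that the linearization at $U_\lambda$ is a small perturbation of $\beta\partial_x^4-\tau\partial_x^2$ as $\lambda\to 0$. The only cosmetic differences are that the paper writes the map in inverted form $F(\lambda,v)=v+A(v)^{-1}(\lambda g(v)+h(v))$ (so that $D_vF(0,0)=\mathrm{id}_{H_D^4(I)}$) and deduces $U_\lambda\le 0$ by invoking the sign-preserving result of \cite{LaurencotWalker_JAM} directly on the full equation rather than through the first-order expansion in $\lambda$.
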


Theorem~\ref{TStable3} is stated more precisely and in more detail in Proposition~\ref{TStable} below. If only small deformations are taken into account or if bending is neglected, then there are no steady states for large values of $\lambda$, see \cite{ELW2,LW_AnalPDE}.

\section{Well-Posedness}\label{s4} 

We aim at formulating \eqref{u}-\eqref{psibc} with $\gamma=0$ as a quasilinear Cauchy problem  only involving the deflection $u$ in an appropriate functional setting. For that purpose, let the (subspaces of) Bessel potential spaces $H_\B^{4\theta}(I)$ including clamped boundary conditions, if meaningful, be defined by
$$
H_\B^{4\theta}(I):=\left\{\begin{array}{lll}
& \{v\in H^{4\theta}(I)\,;\, v(\pm 1)=\partial_x v(\pm 1)=0\}\ , & 4\theta>\dfrac{3}{2}\ ,\\
& & \\
& \{v\in H^{4\theta}(I)\,;\, v(\pm 1)=0\}\ , & \dfrac{1}{2}<4\theta<\dfrac{3}{2}\ ,\\
& & \\
&   H^{4\theta}(I)\ , & 4\theta<\dfrac{1}{2}\ .
\end{array}
\right.
$$
Note that (for example, see \cite{GuidettiMathZ, Triebel}) the spaces $H_{\B}^{4\theta}(I)$  coincide with the complex interpolation spaces	
\begin{equation}\label{interpol}
H_\B^{4\theta}(I) =\big[ L_2(I), H_\B^{4}(I)\big]_\theta\ ,\quad \theta\in [0,1]\setminus\left\{\frac{1}{8}, \frac{3}{8}\right\}\ ,
\end{equation}
except for equivalent norms. To take into account the singular behavior of the right hand side of \eqref{u} as $u\to -1$, we further introduce, for $4\theta>2$ and $\kappa\in (0,1)$, the open subset
$$
S_{\theta}(\kappa):=\left\{v\in H_\B^{4\theta}(I)\,;\, \|v\|_{H_\B^{4\theta}(I)}< 1/\kappa \;\;\text{ and }\;\; -1+\kappa< v(x) \text{ for } x\in I \right\}
$$
of $H_\B^{4\theta}(I)$ with closure 
$$
\overline{S}_{\theta}(\kappa)=\left\{v\in H_\B^{4\theta}(I)\,;\, \|v\|_{H_\B^{4\theta}(I)}\le 1/\kappa \;\;\text{ and }\;\; -1+\kappa\le v(x) \text{ for } x\in I \right\}\ .
$$
The following proposition collects the properties of the solutions to the elliptic problem \eqref{psi}-\eqref{psibc} and is the main ingredient to investigate the parabolic problem for the deflection $u$.

\begin{prop}\label{L1}
Let $4\theta>2$ and $\kappa\in (0,1)$. For each $v\in\overline{S}_{\theta}(\kappa)$ there exists a unique solution \mbox{$\psi_v\in H^2(\Omega(v))$} to 
\begin{align*}
\ve^2 \,\partial_x^2\psi+\partial_z^2\psi &=0 \ ,& (x,z)\in \Omega(v)\ ,\\
\psi(x,z)&=\frac{1+z}{1+v(x)}\ ,& (x,z)\in\partial\Omega(v)\ ,
\end{align*}
with $\Omega(v) = \left\{(x,z)\,;\, -1<x<1\,,\, -1<z<v(x)\right\}$. Moreover, for $4\sigma\in [0,1/2)$, the mapping
$$
g: S_{\theta}(\kappa)\longrightarrow H_\B^{4\sigma}(I)\ ,\quad v\longmapsto \ve^2\vert\partial_{x}\psi_v(\cdot,v)\vert^2+\vert\partial_z\psi_v(\cdot,v)\vert^2
$$
is analytic, bounded, and uniformly Lipschitz continuous. If $v\in S_{\theta}(\kappa)$ is even, then so is $g(v)$.
\end{prop}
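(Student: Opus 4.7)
The plan is to flatten the $v$-dependent domain $\Omega(v)$ to the fixed rectangle $\mathcal{R}:=I\times(0,1)$ via the change of variables $T_v(x,z):=(x,\eta)$ with $\eta:=(1+z)/(1+v(x))$, establish all the claimed properties on $\mathcal{R}$, and then transfer them back through the trace on $\{\eta=1\}$. Since $4\theta>2$, the embedding $H^{4\theta}(I)\hookrightarrow C^1(\bar I)$ together with the lower bound $1+v\ge\kappa$ makes $T_v$ a $C^1$-diffeomorphism uniformly in $v\in\overline{S}_{\theta}(\kappa)$. Setting $\Phi_v(x,\eta):=\psi_v(x,(1+v(x))\eta-1)$ and writing $\Phi_v=\eta+\phi_v$, the problem for $\psi_v$ becomes
\[
L_v\phi_v := -\mathrm{div}_{(x,\eta)}\bigl(A(v)\nabla_{(x,\eta)}\phi_v\bigr) = F(v)\ \text{in}\ \mathcal{R},\qquad \phi_v=0\ \text{on}\ \partial\mathcal{R},
\]
where the entries of the symmetric matrix $A(v)$ are rational in $v$ and $\partial_x v$ with denominators $(1+v)^p\ge\kappa^p$, and $F(v)\in L_2(\mathcal{R})$ is polynomial in $v$, $\partial_x v$, $\partial_x^2 v$ and inverse powers of $1+v$.

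Existence and uniqueness of a weak solution $\phi_v\in H^1_0(\mathcal{R})$ then follow from Lax-Milgram via the uniform ellipticity of $A(v)$. Since $\mathcal{R}$ is convex and the entries of $A(v)$ lie in $H^{4\theta-1}(I)\hookrightarrow C^{0,\alpha}(\bar I)$ for some $\alpha>0$, classical elliptic regularity on convex polygons upgrades $\phi_v$ to $H^2(\mathcal{R})$; pulling back via $T_v^{-1}$ yields the claimed $\psi_v\in H^2(\Omega(v))$.

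For the analyticity, boundedness, and uniform Lipschitz continuity, I would view $v\mapsto A(v)$ and $v\mapsto F(v)$ as maps into $\mathcal{L}(H^2\cap H^1_0(\mathcal{R}),L_2(\mathcal{R}))$ and $L_2(\mathcal{R})$, respectively. Both are analytic on $S_\theta(\kappa)$, being polynomial compositions involving $v$, $\partial_x v$, $\partial_x^2 v$, and the analytic inversion $w\mapsto 1/(1+w)$ (well-defined because $1+v\ge\kappa$). By analyticity of operator inversion on the open set of invertibles, $v\mapsto L_v^{-1}$ is analytic into $\mathcal{L}(L_2(\mathcal{R}),H^2\cap H^1_0(\mathcal{R}))$, and thus $v\mapsto\phi_v=L_v^{-1}F(v)$ is analytic into $H^2(\mathcal{R})$. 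Boundedness on $\overline{S}_\theta(\kappa)$ then follows from uniform invertibility of $L_v$, while uniform Lipschitz continuity comes from estimating differences $L_v-L_{\tilde v}$ and $F(v)-F(\tilde v)$ through the embedding $H^{4\theta}(I)\hookrightarrow C^1(\bar I)$.

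Finally, the chain rule at $\eta=1$ gives
\begin{align*}
\partial_z\psi_v(x,v(x)) &= \frac{1+\partial_\eta\phi_v(x,1)}{1+v(x)},\\
\partial_x\psi_v(x,v(x)) &= \partial_x\phi_v(x,1) - \frac{\partial_x v(x)}{1+v(x)}\bigl(1+\partial_\eta\phi_v(x,1)\bigr),
\end{align*}
and since $\phi_v\in H^2(\mathcal{R})$, the traces $\partial_\eta\phi_v(\cdot,1)$ and $\partial_x\phi_v(\cdot,1)$ belong to $H^{1/2}(I)$. Multiplication by functions in $H^{4\theta-1}(I)\hookrightarrow C(\bar I)$ preserves $H^{1/2}(I)$, and the one-dimensional Sobolev multiplication estimate yields $H^{1/2}(I)\cdot H^{1/2}(I)\hookrightarrow H^{4\sigma}(I)$ for every $4\sigma<1/2$. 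Composing these bounded (bi)linear maps with the analytic map $v\mapsto\phi_v$ transfers analyticity, boundedness, and uniform Lipschitz continuity to $g$. Evenness is a consequence of uniqueness: if $v$ is even, then $A(v)$ and $F(v)$ are invariant under $x\mapsto-x$, so $\tilde\phi_v(x,\eta):=\phi_v(-x,\eta)$ solves the same problem and coincides with $\phi_v$, whence $g(v)$ inherits evenness. I expect the main obstacle to be the analytic-dependence step: choosing Banach spaces for the coefficient map $v\mapsto L_v$ carefully enough that it is genuinely analytic (and not merely smooth), so that operator inversion can be invoked; once this is in place, the trace and Sobolev multiplication steps are routine.
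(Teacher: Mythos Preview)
Your approach is essentially the one the paper takes: the paper's proof consists of the single sentence ``transform the problem to a fixed rectangle and use elliptic regularity theory,'' with the details deferred to \cite[Proposition~5]{ELW1}, and your flattening $T_v$, divergence-form reformulation, Lax--Milgram/regularity step, and trace-plus-multiplication argument for $g$ are exactly that program spelled out.

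One technical point is worth flagging. The paper explicitly invokes the embedding $H^{4\theta}(I)\hookrightarrow W_q^2(I)$ for some $q>2$ with $4\theta>5/2-1/q$, because the argument in \cite{ELW1} is carried out in $L_q$-based spaces rather than in $H^2(\mathcal{R})$. Your sketch stays in the $L_2$ framework and appeals to ``classical elliptic regularity on convex polygons'' with coefficients in $C^{0,\alpha}$; but for $4\theta$ only slightly above $2$ the entries of $A(v)$ (which involve $\partial_x v\in H^{4\theta-1}(I)$) are merely H\"older and not Lipschitz, and the standard $H^2$ regularity results on polygons (Grisvard) ask for Lipschitz principal coefficients. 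Passing to $W_q^2$ with $q>2$ close to $2$ sidesteps this, which is precisely why the paper records that embedding. So your outline is correct in spirit, but at the regularity step you should either restrict to the $W_q^2$ setting as in \cite{ELW1} or justify $H^2$ regularity under the weaker H\"older hypothesis. The analyticity argument you anticipate as the ``main obstacle'' is handled in \cite{ELW1} exactly along the lines you describe (analytic coefficient maps composed with analytic operator inversion), so your expectation there is on target.
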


\begin{proof}
The proof is performed by transforming the problem to a fixed rectangle and using elliptic regularity theory.
A complete proof is contained in \cite[Proposition~5]{ELW1} with the additional use of the embedding $H_\B^{4\theta}(I)\hookrightarrow W_q^2(I)$ when chosing $q\in (2,\infty)$ such that $4\theta>5/2-1/q$.
\end{proof}

\medskip

To prove global existence of solutions for small voltage values $\lambda$ later on, it is useful to separate from the operator $\mathcal{K}$ in \eqref{AA} the 'cubic' third order term stemming from bending. Its regularity properties are stated in the next lemma:

\begin{lem}\label{hh}
Let $\kappa\in (0,1)$, $\delta\in (0,\kappa^{-1}]$, and $4\theta\in (3,4]$. Then the function
$$
h:S_\theta(\kappa)\rightarrow H^{4\theta-3}(I)\ , \quad v\longmapsto \frac{5}{2}\,\ve^2\,\beta\, \partial_{x}\left(\frac{\partial_xv(\partial_{x}^2v)^2}{(1+\ve^2(\partial_{x}v)^2)^{7/2}}\right)
$$
is analytic, and there is $c(\kappa)>0$ such that
$$
\| h(v_1)-h(v_2)\|_{H^{4\theta-3}(I)}\le c(\kappa)\, \delta^2\, \|v_1-v_2\|_{H^{4\theta}(I)}
$$
whenever $v_j\in S_\theta(\kappa)$ with $\|v_j\|_{H^{4\theta}(I)}\le \delta$ for $j=1,2$.
\end{lem}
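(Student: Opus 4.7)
The plan is to write $h(v) = \tfrac{5}{2}\,\ve^2\beta\,\partial_x F(v)$, where
\[
F(v) := \partial_x v\cdot (\partial_x^2 v)^2 \cdot C(v), \qquad C(v) := \bigl(1 + \ve^2 (\partial_x v)^2\bigr)^{-7/2},
\]
and to reduce everything to showing that $F : S_\theta(\kappa) \to H^{4\theta-2}(I)$ is analytic and satisfies the analogous Lipschitz estimate with target $H^{4\theta-2}(I)$; the conclusion then follows by composing with the bounded linear operator $\partial_x : H^{4\theta-2}(I) \to H^{4\theta-3}(I)$.

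The structural input I would invoke is the one-dimensional multiplier/Nemytskii theory in Sobolev algebras: since $4\theta - 2 \in (1,2]$ lies above the embedding threshold $1/2$, $H^{4\theta-2}(I)$ is a Banach algebra under pointwise multiplication, and the composition operator $w \mapsto \varphi(w)$ associated with a real-analytic scalar function $\varphi$ is analytic from $H^{4\theta-2}(I)$ into itself on the open set of $w$ whose range lies in the domain of analyticity of $\varphi$, with locally Lipschitz dependence. Applied to $F$: the maps $v \mapsto \partial_x v$ and $v \mapsto \partial_x^2 v$ are bounded linear from $H^{4\theta}(I)$ into $H^{4\theta-2}(I)$; the square $(\partial_x^2 v)^2$ is analytic by the algebra property; and $C(v)$ is analytic via composition of the scalar map $s \mapsto (1+\ve^2 s)^{-7/2}$, which is analytic in a neighborhood of $[0,\infty)$, with $(\partial_x v)^2 \in H^{4\theta-2}(I)$. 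Analyticity of $F$, and therefore of $h$, follows by composing these analytic maps.

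For the quantitative estimate I would telescope the difference,
\begin{align*}
F(v_1) - F(v_2) ={}& (\partial_x v_1 - \partial_x v_2)(\partial_x^2 v_1)^2\, C(v_1) \\
& + \partial_x v_2 \bigl((\partial_x^2 v_1)^2 - (\partial_x^2 v_2)^2\bigr) C(v_1) \\
& + \partial_x v_2\, (\partial_x^2 v_2)^2 \bigl(C(v_1) - C(v_2)\bigr),
\end{align*}
and measure each summand in $H^{4\theta-2}(I)$. Using $\|v_j\|_{H^{4\theta}(I)} \le \delta \le \kappa^{-1}$ together with the algebra property, one obtains $\|(\partial_x^2 v_j)^2\|_{H^{4\theta-2}(I)} \le c\delta^2$, $\|\partial_x v_j\|_{H^{4\theta-2}(I)} \le c\delta$, a uniform bound $\|C(v_j)\|_{H^{4\theta-2}(I)} \le c(\kappa)$, and the Lipschitz bound $\|C(v_1) - C(v_2)\|_{H^{4\theta-2}(I)} \le c(\kappa)\|v_1 - v_2\|_{H^{4\theta}(I)}$ from the Nemytskii result. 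The factorization $(\partial_x^2 v_1)^2 - (\partial_x^2 v_2)^2 = (\partial_x^2 v_1 + \partial_x^2 v_2)(\partial_x^2 v_1 - \partial_x^2 v_2)$ controls the middle summand. Summing, every term is bounded by $c(\kappa)\,\delta^2\,\|v_1 - v_2\|_{H^{4\theta}(I)}$ (the third even carries an additional power of $\delta$), which yields the stated estimate after applying $\partial_x$.

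The main obstacle is the careful justification of the analyticity and local Lipschitz continuity of the Nemytskii operator at fractional Sobolev regularity, together with tracking how constants depend on $\kappa$ (via the uniform bound $\|\partial_x v\|_{L_\infty(I)} \le c\,\kappa^{-1}$, which confines the argument of the analytic factor $s\mapsto (1+\ve^2 s)^{-7/2}$ to a compact subset of its domain of analyticity). Both points are standard once the algebra property of $H^{4\theta-2}(I)$ is in hand.
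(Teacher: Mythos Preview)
Your proof is correct and follows essentially the same approach as the paper: the paper's proof is a one-liner stating that the result ``follows from the definition of $h$ and the fact that $H^{4\theta-2}(I)$ is an algebra with respect to pointwise multiplication when $4\theta>3$,'' and your argument is precisely a fleshed-out version of this, making explicit the reduction to $F$ via $\partial_x$, the algebra property, the Nemytskii analyticity for the factor $C(v)$, and the telescoping that yields the $\delta^2$ dependence.
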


\begin{proof}
This follows from the definition of $h$ and the fact that $H^{4\theta-2}(I)$ is an algebra with respect to pointwise multiplication when $4\theta>3$, for example, see \cite{AmannMultiplication}.
\end{proof}

Using the definition of the functions $g$ and $h$ from Proposition~\ref{L1} and Lemma~\ref{hh}, respectively, we are in a position to write \eqref{u}-\eqref{psibc} as a quasilinear Cauchy problem for the strip deflection~$u$ as follows:
\begin{equation}\label{CP}
\begin{split}
\partial_t u+ A(u)u
=-\lambda \, g(u)-h(u)\, \quad t>0\,,\qquad u(0)=u^0\, ,
\end{split}
\end{equation}
where, for a sufficiently smooth function $w$ on $I$, the linear operator $A(w)\in \mathcal{L}(H_D^4(I),L_2(I))$ is given by
\begin{equation}\label{AAa}
\begin{split}
A(w)v:=\beta\, & \partial_{x}^2\left(\frac{\partial_{x}^2v}{(1+\ve^2(\partial_{x}w)^2)^{5/2}}\right)
-\tau\, \partial_{x}\left(\frac{\partial_{x}v}{(1+\ve^2(\partial_{x}w)^2)^{1/2}}\right)\ ,\quad v\in H_D^4(I)\ .
\end{split}
\end{equation}
We next study the properties of $A(\cdot)$.  Given $\omega>0$ and $k\ge 1$, we let $\mathcal{H}(H_\B^4(I), L_2(I); k,\omega)$ denote the set of all $\mathcal{A}\in\mathcal{L}(H_\B^4(I), L_2(I))$ such that $\omega+\mathcal{A}$ is an isomorphism from $H_\B^4(I)$ onto $L_2(I)$ and satisfies the resolvent estimates
$$
\frac{1}{k}\,\le\,\frac{\|(\mu+\mathcal{A})v\|_{L_2(I)}}{\vert\mu\vert \,\| v\|_{L_2(I)}+\|v\|_{H_\B^4(I)}}\,\le \, k\ ,\qquad Re\, \mu\ge \omega\ ,\quad v\in H_\B^4(I)\setminus\{0\}\ .
$$
Then $\mathcal{A}\in\mathcal{H}(H_\B^4(I), L_2(I);k,\omega)$ implies $\mathcal{A}\in\mathcal{H}(H_\B^4(I), L_2(I))$, that is, $-\mathcal{A}$  generates an analytic semigroup on $L_2(I)$, see \cite[I.Theorem~1.2.2]{LQPP}.  The quasilinear operator $A$ from \eqref{AAa} enjoys the following properties.

\begin{lem}\label{SG} 
Given $4\theta>7/2$ and $\kappa\in (0,1)$, there are $k:=k(\kappa)\ge 1$ and $\omega:=\omega(\kappa)>0$ such that, for each $w\in \overline{S}_\theta(\kappa)$, the linear operator $A(w)$ defined in \eqref{AAa} is such that
$$
-2\omega+A(w)\in \mathcal{H}(H_\B^4(I), L_2(I);k,\omega)\ .
$$
Moreover, there is a constant $\ell(\kappa)>0$ such that
\begin{equation}\label{esti}
\| A(w_1)-A(w_2)\|_{\ml(H_\B^4(I), L_2(I))}\le \ell(\kappa) \, \|w_1-w_2\|_{H^{4\theta}(I)}\ ,\quad w_1, w_2\in \overline{S}_\theta(\kappa)\ .
\end{equation}
\end{lem}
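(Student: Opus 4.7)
The plan is to view $A(w)$ as a linear fourth-order elliptic operator in $v$ whose coefficients depend smoothly on $w$ through $\partial_x w$ and $\partial_x^2 w$, and then combine classical analytic-semigroup generation results with uniform coefficient bounds valid on $\overline{S}_\theta(\kappa)$. Expanding the outer derivatives in \eqref{AAa} produces
\begin{equation*}
A(w)v \,=\, a(w)\, \partial_x^4 v \,+\, \sum_{j=0}^{3} b_j(w)\, \partial_x^j v,
\end{equation*}
with top-order coefficient $a(w) := \beta(1+\ve^2(\partial_x w)^2)^{-5/2}$ and lower-order coefficients $b_j(w)$ that are smooth functions of $\partial_x w$ and $\partial_x^2 w$. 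Since $4\theta>7/2$, the Sobolev embedding $H_\B^{4\theta}(I)\hookrightarrow C^2(\overline{I})$ yields $\|\partial_x w\|_{L_\infty(I)}+\|\partial_x^2 w\|_{L_\infty(I)}\le c(\kappa)$ on $\overline{S}_\theta(\kappa)$, hence both uniform ellipticity $a(w)(x)\ge \beta(1+\ve^2 c(\kappa)^2)^{-5/2}$ and a uniform $L_\infty$-bound on each $b_j(w)$.

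For the semigroup generation I would invoke a parameter-dependent Agmon-type theorem: the clamped boundary operators $v\mapsto (v(\pm 1),\partial_x v(\pm 1))$ trivially satisfy the Lopatinski\u{\i}-Shapiro condition for any positive fourth-order symbol, so a fourth-order uniformly elliptic operator on $I$ with bounded coefficients generates an analytic semigroup on $L_2(I)$, with resolvent estimates governed only by the ellipticity constant and the coefficient $L_\infty$-bounds. Applying this uniformly over $w\in\overline{S}_\theta(\kappa)$ furnishes $\omega(\kappa)>0$ and $k(\kappa)\ge 1$ with $-2\omega + A(w)\in \mathcal{H}(H_\B^4(I),L_2(I);k,\omega)$; the shift by $2\omega$ absorbs the spectral effect of the lower-order $b_j(w)\partial_x^j v$ terms, and the equivalence between the resolvent estimate defining $\mathcal{H}(\cdot,\cdot;k,\omega)$ and the sector-generator condition is recorded in \cite[I.Theorem~1.2.2]{LQPP}.

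For the Lipschitz estimate \eqref{esti} I would use the factorisation
\begin{equation*}
(A(w_1)-A(w_2))v \,=\, \beta\,\partial_x^2\bigl([f_1(\partial_x w_1)-f_1(\partial_x w_2)]\,\partial_x^2 v\bigr) - \tau\,\partial_x\bigl([f_2(\partial_x w_1)-f_2(\partial_x w_2)]\,\partial_x v\bigr),
\end{equation*}
with $f_1(y):=(1+\ve^2 y^2)^{-5/2}$ and $f_2(y):=(1+\ve^2 y^2)^{-1/2}$. Since $H^{4\theta-1}(I)$ is a Banach algebra (as $4\theta-1>5/2>1/2$) on which composition with smooth real-valued functions is locally Lipschitz, a Moser-type estimate gives
\begin{equation*}
\|f_j(\partial_x w_1)-f_j(\partial_x w_2)\|_{H^{4\theta-1}(I)} \,\le\, c(\kappa)\,\|w_1-w_2\|_{H^{4\theta}(I)};
\end{equation*}
then the embedding $H^{4\theta-1}(I)\hookrightarrow C^2(\overline{I})$ allows me to distribute the outer $\partial_x^2$ and $\partial_x$ and bound every resulting product in $L_2(I)$ by $c(\kappa)\|w_1-w_2\|_{H^{4\theta}(I)}\|v\|_{H_\B^4(I)}$, which delivers \eqref{esti}.

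The main obstacle I anticipate is the \emph{uniformity} of the resolvent estimates, that is, the fact that $k$ and $\omega$ depend only on $\kappa$. Generation is classical for each individual $A(w)$, but extracting $\kappa$-dependent constants rests either on a careful parameter-dependent Agmon argument, or alternatively on a perturbation argument based on the constant-coefficient reference operator $\beta\partial_x^4-\tau\partial_x^2$ on $H_\B^4(I)$ combined with the uniform coefficient bounds established in the first step; the other parts of the lemma are essentially reductions to algebra and composition properties of the spaces $H^{4\theta-1}(I)$.
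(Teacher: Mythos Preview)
Your proposal is on the right track, but there is a small slip and a methodological difference from the paper worth noting.

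The slip: when you expand $A(w)v$ into non-divergence form, the coefficient $b_2(w)$ in front of $\partial_x^2 v$ contains $\partial_x^2\bigl((1+\ve^2(\partial_x w)^2)^{-5/2}\bigr)$, which involves $\partial_x^3 w$, not only $\partial_x w$ and $\partial_x^2 w$ as you state. Hence the embedding $H^{4\theta}(I)\hookrightarrow C^2(\overline I)$ you invoke is insufficient; you need $H^{4\theta}(I)\hookrightarrow C^3(\overline I)$, and this is precisely why the hypothesis reads $4\theta>7/2$ rather than $4\theta>5/2$. With $C^3$ in place your uniform coefficient bounds go through.

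On the uniformity issue you single out at the end: the paper does not appeal to a parameter-dependent Agmon theorem. It obtains generation for each fixed $w$ (normal ellipticity of the principal part plus lower-order perturbation, as you also sketch), and then gets the $\kappa$-uniform spectral bound and resolvent estimate by exploiting the \emph{divergence structure} of $A(w)$ directly. Testing the eigenvalue equation $\mu\varphi+A(w)\varphi=0$ against $\bar\varphi$ and integrating by parts twice (the clamped conditions kill all boundary terms) yields
\begin{equation*}
\mu\,\|\varphi\|_{L_2}^2 \;=\; -\,\beta\int_I W^{-5/2}\,|\partial_x^2\varphi|^2\,\rd x \;-\; \tau\int_I W^{-1/2}\,|\partial_x\varphi|^2\,\rd x\,,\qquad W:=1+\ve^2(\partial_x w)^2\,,
\end{equation*}
so $\mu$ is real and, via Poincar\'e and the uniform bound on $\|W\|_\infty$ over $\overline{S}_\theta(\kappa)$, one obtains $\mu\le -2\omega(\kappa)$ explicitly. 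The same energy test, now against $\bar v$ in $(\mu-2\omega+A(w))v=f$, produces the uniform resolvent bound $\|(\mu-2\omega+A(w))^{-1}\|_{\ml(L_2(I))}\le c(\kappa)/|\mu|$ for $\mathrm{Re}\,\mu>0$, after which \cite[I.Remark~1.2.1(a)]{LQPP} delivers membership in $\mathcal{H}(H_\B^4(I),L_2(I);k,\omega)$. This is more direct and more transparent than tracking constants through an abstract generation theorem, and it dispatches exactly the obstacle you flag. For \eqref{esti}, your Moser-type argument in $H^{4\theta-1}(I)$ is correct but heavier than necessary: the paper simply uses the $C^3$ embedding and elementary pointwise estimates.
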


\begin{proof}[{\bf Proof}]
Let $w\in \overline{S}_\theta(\kappa)$ be fixed and put $W:= 1+\ve^2(\partial_x w)^2$. Notice that $W\in C^2([-1,1])$ thanks to the continuous embedding of $H^{4\theta}(I)$ in $C^3([-1,1])$ and that $-A(w)$ reads
$$
-A(w) v = - \beta \partial_x^2\left( W^{-5/2} \partial_x^2 v \right) + \tau \partial_x\left( W^{-1/2} \partial_x v \right)\ , \qquad v\in H_D^4(I)\ .
$$
We then claim that $-A(w)$ is the generator of an analytic semigroup on $L_2(I)$. Indeed, consider its principal part $-\mathcal{A}_*$ defined by $
\mathcal{A}_* v:= \beta W^{-5/2} \partial_x^4 v$ and the boundary operator $\mathcal{B}_* v(\pm 1):= \big( v(\pm 1), -i\partial_x v(\pm 1)\big)$. Noticing that $W(\pm 1)=1$, it is readily seen that $(\mathcal{A}_*, \mathcal{B}_*)$ is normally elliptic in the sense of  \cite[Remark~4.2(b)]{AmannTeubner} and thus $-\mathcal{A}_* \vert_{H_\B^4(I)}$ is the generator of an analytic semigroup on $L_2(I)$. Now, since $-A(w)$ can be considered as a lower order perturbation of $-\mathcal{A}_* \vert_{H_\B^4(I)}$, the same is true for $-A(w)$, see \cite[I.Theorem~1.3.1]{LQPP} for instance.

Next, since $-A(w)$ has a compact resolvent, its spectrum consists of eigenvalues only. If $\mu\in \C$ is such an eigenvalue  with a corresponding eigenfunction $\varphi\in H_\B^4(I,\C)$, then testing the identity
$$
\mu \varphi+ A(w)\varphi=0
$$ 
with its complex conjugate $\overline{\varphi}$ shows that necessarily $\mu\in\R$ with 
$$
\mu\le-( \beta \mu_1^2 \|W\|_\infty^{-5/2}+\tau\mu_1\|W\|_\infty^{-1/2})=:-2\varsigma(w)\ ,
$$ 
where $\mu_1 =\pi^2/4$ is the principal eigenvalue of $-\partial_x^2$ with homogeneous Dirichlet boundary conditions in $I$. Thus, setting 
$$
\omega=\omega(\kappa):= \inf_{w\in \overline{S}_\theta(\kappa)} \varsigma(w)\, >0\ ,
$$ 
then, for each $w\in \overline{S}_\theta(\kappa)$, the half plane $\{ \mu\in\C\ ;\ \mathrm{Re}\, \mu>-2\omega\}$ is contained in the resolvent set of $-A(w)$, and, in particular, $-\omega+A(w)$ is an isomorphism from $H_\B^4(I)$ onto $L_2(I)$. Moreover, given $w\in \overline{S}_\theta(\kappa)$, $f\in L_2(I,\C)$, and $\mu\in\C$ with $\mathrm{Re}\,\mu >0$, the equation
$$
\mu v-2\omega v+ A(w)v=f
$$  
has a unique solution $v\in H_D^4(I)$. Testing this identity by the complex conjugate $\bar{v}$ of $v$, one  derives a resolvent estimate of the form
$$
\|(\mu-2\omega+A(w))^{-1}\|_{\ml(L_2(I))} \le \frac{c(\kappa)}{\vert\mu\vert} \ ,\quad w\in \overline{S}_\theta(\kappa)\ .
$$
Since obviously
$$
\|-2\omega+A(w)\|_{\ml(H_\B^4(I),L_2(I))}\le c_1(\kappa)\ ,\quad w\in \overline{S}_\theta(\kappa)\ ,
$$
it follows from \cite[I.Remark~1.2.1(a)]{LQPP} that 
$$
-2\omega+A(w)\in \mathcal{H}(H_\B^4(I),L_2(I);k,\omega)
$$
for some $k=k(\kappa)\ge 1$. Finally, the Lipschitz continuity \eqref{esti} follows from the definitions of $A(w)$ and $\overline{S}_\theta(\kappa)$ and the continuous embedding of $H^{4\theta}(I)$ in $C^3([-1,1])$ by straightforward computations.
\end{proof}

Clearly, the lower bound $w\ge -1+\kappa$ on $I$ for $w\in \overline{S}_\theta (\kappa)$ is not needed for Lemma~\ref{SG} to hold true, but is introduced for easier notation later on.

Now, if $w$ is a time-dependent function, then the solution $v$ to the linear Cauchy problem
$$
\dfrac{\rd}{\rd t} v + A(w(t))v = f(t)\ ,\quad t>s\ ,\qquad v(s)=v^0\ ,
$$
can no longer be expressed by a variation-of-constant formula involving semigroups. The representation formula in this case rather relies on the construction of a parabolic evolution operator $U_{A(w)}$ and reads
$$
v(t)= U_{A(w)}(t,s) v^0 + \int_s^t U_{A(w)}(t,r) f(r)\,\rd r\ ,\quad t>s\ .
$$
According to \cite{LQPP}, the construction of a (unique) parabolic evolution operator $U_{A(w)}$ is possible if $w$ is H\"older continuous in $t$. More precisely:

\begin{prop}\label{ES}
Let $\kappa\in (0,1)$ and $4\theta>7/2$. Let $\ell(\kappa)>0$ be as in Lemma~\ref{SG} and, given $\rho\in (0,1)$ and $N,T>0$, define
\begin{equation*}
\begin{split}
\mathcal{W}_T(\kappa):=\{w\in C([0,T],H_\B^{4\theta}(I))\,;\,\|&w(t)-w(s)\|_{H_\B^{4\theta}(I)}\le\frac{N}{\ell(\kappa)} \vert t-s\vert^\rho\\
&\text{and  } w(t)\in \overline{S}_\theta(\kappa)\ \text{for } 0\le t,s\le T\}\ ,
\end{split}
\end{equation*}
which is a complete metric space for the distance 
$$
\mathrm{dist}_{\mathcal{W}_T(\kappa)}(v,w) := \max_{t\in [0,T]}\left\{ \| v(t)-w(t)\|_{H_\B^{4\theta}(I)} \right\}\ , \qquad v,w \in \mathcal{W}_T(\kappa)\ ,
$$
induced by the uniform topology of $C([0,T],H_\B^{4\theta}(I))$. Then, there is a constant $c_0(\rho,\kappa)>0$ independent of $N$ and $T$ such that the following is true: for each  $w\in \mathcal{W}_T(\kappa)$, there exists a unique parabolic evolution operator $U_{A(w)}(t,s)$, $0\le s\le t\le T$, and
$$
\|U_{A(w)}(t,s)\|_{\mathcal{L}(H_\B^{4\alpha}(I),H_\B^{4\beta}(I))} \le c_*(\kappa)\, (t-s)^{\alpha-\beta}\, e^{-\vartheta (t-s)}\ ,\quad 0\le s < t\le T\ ,
$$
for $0\le \alpha\le\beta\le 1$ with $4\alpha, 4\beta\notin\{ 1/2, 3/2\}$ with a constant $c_*(\kappa)\ge 1$ depending on $N$, $\alpha$, and $\beta$ but independent of~$T$, and
\begin{equation}\label{vartheta}
-\vartheta:=c_0(\rho,\kappa) N^{1/\rho}-\omega(\kappa)\ ,
\end{equation} 
where $\omega(\kappa)>0$ stems from Lemma~\ref{SG}.
\end{prop}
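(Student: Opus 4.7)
The plan is to verify that the family $\{A(w(t))\}_{t\in[0,T]}$ meets the hypotheses of the abstract Sobolevskii--Tanabe generation theorem for parabolic evolution operators in \cite{LQPP} and then to extract the quantitative decay rate from a careful analysis of the associated Dyson series.

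First I would record the two structural inputs. By Lemma~\ref{SG}, for every $w\in\mathcal{W}_T(\kappa)$ the family $\{-2\omega+A(w(t))\}_{t\in[0,T]}$ lies uniformly in $\mathcal{H}(H_\B^4(I),L_2(I);k,\omega)$ with constants $k,\omega$ depending only on $\kappa$. Combining the Lipschitz bound~\eqref{esti} with the H\"older constraint built into the definition of $\mathcal{W}_T(\kappa)$ yields
$$
\|A(w(t))-A(w(s))\|_{\mathcal{L}(H_\B^4(I),L_2(I))}\le \ell(\kappa)\|w(t)-w(s)\|_{H_\B^{4\theta}(I)}\le N\,|t-s|^\rho\, .
$$
The abstract generation theorem in \cite{LQPP} then provides a unique parabolic evolution operator $U_{A(w)}(t,s)$ for $0\le s\le t\le T$. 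The $L_2$-to-$H_\B^{4\beta}$ smoothing bound with rate $(t-s)^{-\beta}$ is inherited from the analyticity of each frozen-coefficient semigroup, and~\eqref{interpol} identifies the source space $H_\B^{4\alpha}$ with the complex interpolation space $[L_2(I),H_\B^4(I)]_\alpha$ for $4\alpha\notin\{1/2,3/2\}$, so standard interpolation and duality arguments promote the bound to the full scale $0\le\alpha\le\beta\le 1$, yielding the factor $(t-s)^{\alpha-\beta}$ and the constant $c_*(\kappa)$.

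The main obstacle is the quantitative exponential factor $e^{-\vartheta(t-s)}$ with $-\vartheta=c_0(\rho,\kappa)\,N^{1/\rho}-\omega(\kappa)$ and, in particular, the requirement that $c_0$ be independent of $N$ and $T$. Since the spectrum of each $A(w(s))$ lies in $\{\operatorname{Re}\mu\ge 2\omega(\kappa)\}$ (as verified inside the proof of Lemma~\ref{SG}), the frozen-coefficient semigroup satisfies
$$
\|e^{-\sigma A(w(s))}\|_{\mathcal{L}(H_\B^{4\alpha}(I),H_\B^{4\beta}(I))}\le c_*(\kappa)\,\sigma^{\alpha-\beta}\,e^{-\omega(\kappa)\sigma}\, ,\qquad\sigma>0\, .
$$
Representing $U_{A(w)}(t,s)$ through the Duhamel identity
$$
U_{A(w)}(t,s)=e^{-(t-s)A(w(s))}+\int_s^t U_{A(w)}(t,\tau)\,[A(w(s))-A(w(\tau))]\,e^{-(\tau-s)A(w(s))}\,\rd\tau\, ,
$$
and iterating, each additional step contributes a convolution against a kernel bounded by $K(\sigma)\le C(\kappa)\,N\,\sigma^{\rho-1}e^{-\omega(\kappa)\sigma}$. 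An induction based on the Beta-function identity gives the $n$-fold convolution bound $K^{\ast n}(\sigma)\le C(\kappa)^n N^n \Gamma(\rho)^n\sigma^{n\rho-1}e^{-\omega(\kappa)\sigma}/\Gamma(n\rho)$, and summing produces a Mittag--Leffler majorant whose standard asymptotics deliver an estimate of the form $C\exp\bigl(c_0(\rho,\kappa)\,N^{1/\rho}(t-s)\bigr)\,e^{-\omega(\kappa)(t-s)}$. Carrying out the same iteration in the $H_\B^{4\alpha}$-to-$H_\B^{4\beta}$ scale and bookkeeping the extra smoothing factor $(t-s)^{\alpha-\beta}$ yields the full inequality with the decay rate announced in~\eqref{vartheta}.
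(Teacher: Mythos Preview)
Your argument is correct and follows essentially the same route as the paper: you verify that $A(w(\cdot))$ is $\rho$-H\"older with constant $N$ and that $-2\omega(\kappa)+A(w(t))$ lies uniformly in $\mathcal{H}(H_\B^4(I),L_2(I);k(\kappa),\omega(\kappa))$, then invoke the abstract construction in \cite{LQPP} together with the interpolation identification~\eqref{interpol}. The only difference is one of presentation: where the paper simply cites \cite[II.Theorem~5.1.1 \& Lemma~5.1.3]{LQPP} for both the existence of $U_{A(w)}$ and the precise exponential rate $-\vartheta=c_0(\rho,\kappa)N^{1/\rho}-\omega(\kappa)$, you unpack the mechanism behind that lemma via the Duhamel/Dyson iteration and the Mittag--Leffler summation, which is exactly how the $N^{1/\rho}$ dependence arises in Amann's proof.
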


\begin{proof}[{\bf Proof}]
Since
\begin{equation}\label{lqpp1}
A(w)\in C^\rho([0,T],\ml(H_\B^{4}(I),L_2(I)))\ ,\quad -2\omega(\kappa) + A(w)\subset \mathcal{H}(H_\B^{4}(I),L_2(I);k(\kappa),\omega(\kappa))
\end{equation}
with 
\begin{equation}\label{lqpp2}
\sup_{0\le s<t\le T}\frac{\| A(w(t))-A(w(s))\|_{\ml(H_\B^{4}(I),L_2(I))}}{\vert t-s\vert^\rho}\le N
\end{equation}
for each  $w\in \mathcal{W}_T(\kappa)$ by Lemma~\ref{SG}, the assertion follows from \cite[II.Theorem~5.1.1\& Lemma~5.1.3]{LQPP} and the interpolation result \eqref{interpol}.
\end{proof}

We are now in a position to prove the well-posedness of \eqref{CP}. {

\begin{prop}[{\bf Local Well-Posedness}]\label{Alocal}
 Given $4\xi\in (7/2,4]$, consider an initial condition $u^0\in  H_\B^{4\xi}(I)$ such that $u^0(x)>-1$ for $x\in I$. Then, for each voltage value $\lambda>0$, there is a unique  solution $(u,\psi)$ to \eqref{u}-\eqref{psibc} on the maximal interval of existence $[0,T_m)$ in the sense that
$$
u\in C^1\big((0,T_m),L_2(I)\big)\cap C\big((0,T_m), H_\B^4(I)\big) \cap C\big([0,T_m), H_\B^{4\xi}(I)\big)
$$
satisfies \eqref{u}-\eqref{ic} together with
$$
u(t,x)>-1\ ,\quad (t,x)\in [0,T_m)\times I\ , 
$$ 
and $\psi(t)\in H^2\big(\Omega(u(t))\big)$  solves \eqref{psi}-\eqref{psibc} in $\Omega(u(t))$ for each $t\in [0,T_m)$. In addition, if $\xi=1$, then $u\in C^1\big([0,T_m),L_2(I)\big)$.
\end{prop}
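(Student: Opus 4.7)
The plan is to apply a Banach fixed point argument to the mild formulation of the quasilinear Cauchy problem \eqref{CP}, building on the evolution operator framework of Proposition~\ref{ES}. Fix $\kappa\in (0,1)$ so small that $u^0\in S_\xi(\kappa)$, pick $\rho\in (0,1)$, and let $T,N>0$ be parameters to be specified. On the complete metric space $\mathcal{W}_T(\kappa)$ of Proposition~\ref{ES}, taken with $\theta=\xi$, consider the map
$$
\Phi(w)(t):= U_{A(w)}(t,0)\,u^0-\int_0^t U_{A(w)}(t,s)\,\bigl[\lambda\,g(w(s))+h(w(s))\bigr]\,\rd s\ ,\qquad t\in [0,T]\ ,
$$
which is well-defined thanks to Lemma~\ref{SG}, Proposition~\ref{ES}, and the bounds on $g$ and $h$ provided by Proposition~\ref{L1} and Lemma~\ref{hh}. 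Any fixed point of $\Phi$ is a mild solution of \eqref{CP}.

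To show that $\Phi$ maps $\mathcal{W}_T(\kappa)$ into itself for $T$ sufficiently small and $N$ suitably chosen, I would use the smoothing estimates of Proposition~\ref{ES} in the form $\|U_{A(w)}(t,s)\varphi\|_{H_\B^{4\xi}(I)}\le c_*(\kappa)(t-s)^{\alpha-\xi}\|\varphi\|_{H_\B^{4\alpha}(I)}$, together with the uniform bound $g(w(s))\in H_\B^{4\sigma}(I)$ (with $4\sigma\in (0,1/2)$) from Proposition~\ref{L1} and $h(w(s))\in H^{4\xi-3}(I)\hookrightarrow H_\B^{4\alpha}(I)$ for any $4\alpha<1/2$ from Lemma~\ref{hh}. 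Since $\xi\in (7/8,1]$ and $\alpha$ can be chosen strictly positive, all time integrals $\int_0^t (t-s)^{\alpha-\xi}\,\rd s$ are finite, and the uniform $H_\B^{4\xi}$-bound on $u^0$ together with standard continuity arguments yield both $\Phi(w)(t)\in\overline{S}_\xi(\kappa)$ on a short interval and the required H\"older-in-time regularity of $\Phi(w)$. The pointwise bound $\Phi(w)(t)>-1+\kappa$ follows from the Sobolev embedding $H_\B^{4\xi}(I)\hookrightarrow C([-1,1])$ and the smallness of $\Phi(w)(t)-u^0$ in sup-norm for $t$ small.

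The contraction estimate rests on the Lipschitz bounds on $g$ (Proposition~\ref{L1}), $h$ (Lemma~\ref{hh}), and the Lipschitz estimate \eqref{esti} for $A$, exploited through the perturbation identity
$$
U_{A(v)}(t,s) - U_{A(w)}(t,s)=\int_s^t U_{A(v)}(t,r)\,[A(w(r))-A(v(r))]\,U_{A(w)}(r,s)\,\rd r
$$
available in the framework of \cite{LQPP}. This allows me to bound $\Phi(v)-\Phi(w)$ in the uniform $H_\B^{4\xi}$-topology by $\mathrm{dist}_{\mathcal{W}_T(\kappa)}(v,w)$ times a prefactor vanishing as $T\to 0^+$, so the Banach fixed point theorem yields a unique $u\in\mathcal{W}_T(\kappa)$ with $\Phi(u)=u$; uniqueness in the broader class stated in the proposition follows from a Gronwall-type estimate applied to two solutions via the same mild formulation. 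Classical parabolic bootstrapping then upgrades $u$ to $u\in C((0,T],H_\B^4(I))\cap C^1((0,T],L_2(I))$ satisfying \eqref{CP} pointwise for $t>0$, Proposition~\ref{L1} reconstructs $\psi(t)\in H^2(\Omega(u(t)))$ pointwise in $t$, and a standard reopening argument extends the solution to a maximal interval $[0,T_m)$ as long as $u(t)$ stays inside $S_\xi(\kappa')$ for some $\kappa'\in (0,1)$. In the case $\xi=1$, $u^0\in H_\B^4(I)$ makes $A(u^0)u^0+\lambda g(u^0)+h(u^0)\in L_2(I)$ directly, so $\partial_t u$ extends continuously to $t=0$ in $L_2(I)$, yielding $u\in C^1([0,T_m),L_2(I))$.

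The main technical obstacle is the contraction step: quantifying the dependence of $U_{A(\cdot)}$ on its driving function is notoriously delicate for quasilinear parabolic problems, and it is precisely the H\"older-in-time structure built into $\mathcal{W}_T(\kappa)$ through the bound $\|w(t)-w(s)\|_{H_\B^{4\xi}(I)}\le (N/\ell(\kappa))|t-s|^\rho$ that renders the above perturbation identity tractable, via \eqref{esti} and the estimates of Proposition~\ref{ES}.
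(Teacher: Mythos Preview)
Your overall strategy matches the paper's: a Banach fixed point argument in the H\"older-in-time space $\mathcal{W}_T(\kappa)$ of Proposition~\ref{ES}, using the Lipschitz bounds on $g$, $h$, and $A(\cdot)$ together with the perturbation theory for parabolic evolution operators from \cite{LQPP}. However, there is a genuine gap in your choice $\theta=\xi$.

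The space $\mathcal{W}_T(\kappa)$ requires that each element $w$ satisfy the H\"older bound $\|w(t)-w(s)\|_{H_\B^{4\theta}(I)}\le (N/\ell(\kappa))|t-s|^\rho$. For $\Phi$ to map $\mathcal{W}_T(\kappa)$ into itself you must therefore show that $t\mapsto U_{A(w)}(t,0)u^0$ is H\"older continuous with values in $H_\B^{4\theta}(I)$. The relevant result from \cite[II.Theorem~5.3.1]{LQPP} (which the paper invokes for estimate \eqref{122}) gives this H\"older continuity only at the price of extra regularity on the initial datum: one obtains an exponent $\rho>0$ in $H_\B^{4\theta}(I)$ provided $u^0\in H_\B^{4\theta+8\rho}(I)$. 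With your choice $\theta=\xi$ and $u^0$ merely in $H_\B^{4\xi}(I)$, there is no room left, and $t\mapsto U_{A(w)}(t,0)u^0$ is in general only continuous (not H\"older) in $H_\B^{4\xi}(I)$. Your phrase ``standard continuity arguments yield \ldots\ the required H\"older-in-time regularity of $\Phi(w)$'' glosses over exactly this point, and the argument does not close.

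The paper's fix is to pick $4\theta\in(7/2,4\xi)$ \emph{strictly} and set $\rho:=(\xi-\theta)/2>0$; then $4\theta+8\rho=4\xi$, so the assumed regularity $u^0\in H_\B^{4\xi}(I)$ is precisely what is needed to get a H\"older exponent $2\rho$ for the linear flow in the weaker space $H_\B^{4\theta}(I)$. The fixed point is found in $\mathcal{W}_T(\kappa)\subset C([0,T],H_\B^{4\theta}(I))$, and only afterwards is the full $C([0,T],H_\B^{4\xi}(I))$ regularity recovered by invoking \cite[Theorem~10.1]{AmannTeubner}. Once you make this adjustment, the rest of your outline (contraction via the perturbation identity, reconstruction of $\psi$, maximal extension, and the $\xi=1$ endpoint) aligns with the paper's proof.
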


\begin{proof}
Let $4\xi\in (7/2,4]$ and  consider an initial condition $u^0\in  H_\B^{4\xi}(I)$ such that $u^0(x)>-1$ for $x\in I$. Choose $4\theta\in (7/2,4\xi)$ and set $\rho:=(\xi-\theta)/2>0$. Clearly, there is $\kappa\in (0,1/2)$ such that
\begin{equation}\label{44}
u^0\in \overline{S}_\theta(2\kappa) \cap \overline{S}_\xi(2\kappa)\ .
\end{equation}
Let $\omega(\kappa)>0$ and $c_0(\rho,\kappa)>0$ be as in Lemma~\ref{SG} and Proposition~\ref{ES}, respectively,
 and choose then $N>0$ such that
\begin{equation}
-\vartheta:=c_0(\rho, \kappa) N^{1/\rho}-\omega(\kappa)<0 \label{thetabis}
\end{equation}
in \eqref{vartheta}. Moreover, fix $4\sigma\in (0,1/2)$ and note that Proposition~\ref{ES} ensures that, for each $w\in\mathcal{W}_T(\kappa)$, the corresponding parabolic evolution operator $U_{A(w)}$ satisfies
\begin{equation}\label{UA}
\|U_{A(w)}(t,s)\|_{\mathcal{L}(H_\B^{4\theta}(I))} + (t-s)^{\theta-\sigma}\, \|U_{A(w)}(t,s)\|_{\mathcal{L}(H_\B^{4\sigma}(I),H_\B^{4\theta}(I))} \le c_*(\kappa)\, \, e^{-\vartheta (t-s)}\ ,
\end{equation}
for $0\le s \le t\le T$, where the constant $c_*(\kappa)\ge 1$ is independent of $w$ and $T>0$. 
With this notation, we are now in a position to set up the fixed point argument. Given $\delta\in (0,1/\kappa]$, we introduce a subset of $\mathcal{W}_T(\kappa)$ defined by 
$$
\mathcal{W}_T(\kappa,\delta):=\{w\in \mathcal{W}_T(\kappa)\,;\, \|w(t)\|_{H_\B^{4\theta}(I)}\le\delta\}\ ,
$$
and note that $\mathcal{W}_T(\kappa,\kappa^{-1})=\mathcal{W}_T(\kappa)$ (the role of $\delta$ will become clear later when addressing global existence issues in the proof of Corollary~\ref{Aglobalc}). Then Proposition~\ref{L1} and Lemma~\ref{hh} guarantee the existence of a constant $c_2(\kappa)>0$ independent of $T>0$ such that
\begin{equation}\label{10}
\|g(v (t))-g(w(t))\|_{H_\B^{4\sigma}(I)} + \frac{1}{\delta^2}\,\|h(v(t))-h(w(t))\|_{H_\B^{4\sigma}(I)}\le c_2(\kappa)\, \mathrm{dist}_{\mathcal{W}_T(\kappa)}(v,w)
\end{equation}
and
\begin{equation}\label{10a}
\| g(v(t))\|_{H_\B^{4\sigma}(I)}+ \frac{1}{\delta^3}\| h(v(t))\|_{H_\B^{4\sigma}(I)}\le c_2(\kappa)
\end{equation}
for $0\le t\le T$ whenever $v,w\in\mathcal{W}_T(\kappa,\delta)$ (recall that $4\sigma<1/2<4\theta - 3$ and $h(0)=0$).
Owing to the definition of parabolic evolution operators, the solution to \eqref{CP} is a fixed point of the map $\Lambda$ defined by
$$
\Lambda(v)(t):=U_{A(v)}(t,0)\,u^0-\int_0^t U_{A(v)}(t,s)\, \big(\lambda g\big(v(s)\big) +h(v(s)\big)\,\rd s\ ,\quad t\in [0,T]\ ,\quad v\in \mathcal{W}_T(\kappa)\ .
$$
We then claim that, for arbitrary $\lambda>0$, the map $\Lambda$ is a contraction from $\mathcal{W}_T(\kappa)$ into itself if $T=T(\kappa,\lambda)>0$ is sufficiently small as well as a contraction on $\mathcal{W}_T(\kappa,\delta)$ for any $T>0$ provided that both $\lambda$ and the initial condition $u^0$ are sufficiently small. To see this, consider $\delta\in (0,1/\kappa]$ and $v \in \mathcal{W}_T(\kappa,\delta)$. We infer from \eqref{lqpp1}, \eqref{lqpp2}, \eqref{10a}, the continuous embedding of $H_\B^{4\sigma}(I)$ in $L_2(I)$, and \cite[II.Theorem~5.3.1]{LQPP} that there is a constant  $m_*(\kappa)>0$ depending only on $\kappa$, $\xi$, and $\theta$ such that, for $0\le s\le t\le T$, 
\begin{equation}\label{122}
\begin{split}
\|\Lambda(& v)(t)-\Lambda(v)(s)\|_{H_\B^{4\theta}(I)}\\
&\le m_*(\kappa)\,   (t-s)^{2\rho}\, e^{-\vartheta t}\, \left(\|u^0\|_{H_\B^{4\theta+8\rho}(I)} + \|\lambda\,g(v)+h(v)\|_{L_\infty ((0,T),H_\B^{4\sigma}(I))}\right)\\
&\le m_*(\kappa)\, \left(\|u^0\|_{H_\B^{4\xi}(I)} +(\lambda+\delta^{3})\, c_2(\kappa)\right)\, (t-s)^{2\rho} e^{-\vartheta t} \\
&\le m_*(\kappa)\,  \left(\max_{0\le r\le T} r^{\rho}\, e^{-\vartheta r} \right)\, \left(\|u^0\|_{H_\B^{4\xi}(I)} +(\lambda+\delta^{3})\, c_2(\kappa)\right)\, (t-s)^{\rho} \ .
\end{split}
\end{equation}
We deduce in particular from \eqref{122} and the continuous embedding of $H^{4\xi}(I)$ in $H^{4\theta}(I)$ that, for $0\le t\le T$,
 \begin{equation}\label{122a}
\begin{split}
\|\Lambda(v)(t)\|_{H_\B^{4\theta}(I)}& \le\|\Lambda(v)(t)-\Lambda(v)(0)\|_{H_\B^{4\theta}(I)} +\|u^0\|_{H_\B^{4\theta}(I)}\\
&\le m_*(\kappa)\,   \left(\max_{0\le r\le T} r^{2\rho}\, e^{-\vartheta r}\right)\, \left(\|u^0\|_{H_\B^{4\xi}(I)} +(\lambda+\delta^{3})\, c_2(\kappa)\right)\, + c\|u^0\|_{H_\B^{4\xi}(I)}\ .
\end{split}
\end{equation}
Moreover, since $u^0\ge -1+2\kappa$ in $I$ by \eqref{44} and since $H_\B^{4\theta}(I)$ is continuously embedded in $L_\infty(I)$ with embedding constant, say, $c_3>0$, a further consequence of \eqref{122} is that, for $0\le t\le T$,
\begin{align}
\Lambda(v)(t)&=\  u^0-\big(\Lambda (v)(0)-\Lambda (v)(t)\big)\ge u^0- c_3\, \|\Lambda (v)(0)-\Lambda (v)(t)\|_{H_\B^{4\theta}(I)}\nonumber \\
&\ge  -1 + 2\kappa - c_3\,m_*(\kappa)\,   \left(\max_{0\le r\le T} r^{2\rho}\, e^{-\vartheta r}\right)\, \left(\|u^0\|_{H_\B^{4\xi}(I)} +(\lambda+\delta^{3})\, c_2(\kappa)\right) \label{zui}\ .
\end{align}
Next, if $v$ and $w$ are arbitrary elements in $\mathcal{W}_T(\kappa,\delta)$, it follows from \eqref{lqpp1}, \eqref{lqpp2}, the continuous embedding of $H_\B^{4\sigma}(I)$ in $L_2(I)$, and \cite[II.Theorem~5.2.1]{LQPP} that there is a constant $n_*(\kappa)>0$ depending only on $\kappa$, $\xi$, and $\theta$ such that, for $0\le t\le T$,
\begin{equation*}
\begin{split}
\|\Lambda(v)(t)&-\Lambda(w)(t)\|_{H_\B^{4\theta}(I)}\\
 &\le n_*(\kappa)\, e^{-\vartheta t}\,\Bigg\{ t^{1-\theta}\, \|\lambda\,(g(v)-g(w))+h(v)-h(w)\|_{L_\infty((0,t), H_\B^{4\sigma}(I))} \\
 & \qquad\qquad\qquad +t^{2\rho}\, \|A(v)-A(w)\|_{C([0,T],\ml(H_\B^{4}(I),L_2(I)))}\,\left[\|u^0\|_{H_\B^{4\xi}(I)} \right.\\
 &\qquad\qquad\qquad\qquad\qquad\qquad \left. \,+\, t^{1+\sigma-\theta-2\rho}\, \|\lambda\,g(v)+h(v)\|_{L_\infty((0,t), H_\B^{4\sigma}(I))}\right]     \Bigg\}\\
 &\le n_*(\kappa)\, t^{1-\theta}\, e^{-\vartheta t}\, c_2(\kappa)\,\big(\lambda+\delta^2\big)\,  \mathrm{dist}_{\mathcal{W}_T(\kappa)}(v,w)\\
 & \quad+n_*(\kappa)\,  t^{2\rho}\, e^{-\vartheta t} \, \ell(\kappa)\,  \mathrm{dist}_{\mathcal{W}_T(\kappa)}(v,w)\,\left[\|u^0\|_{H_\B^{4\xi}(I)}\,+\, t^{1+\sigma-\theta-2\rho}\, c_2(\kappa)\big(\lambda+\delta^3\big)\right]\ ,
\end{split}
\end{equation*}
where we have used \eqref{esti}, \eqref{10}, and \eqref{10a} for the second inequality. Thus there is $c_4(\kappa)>0$ such that 
\begin{equation}
\begin{split}\label{12}
\mathrm{dist}_{\mathcal{W}_T(\kappa)}(\Lambda(v),\Lambda(w))
 & \le c_4(\kappa)\, \left(\max_{0\le r\le T} r^{1-\theta}\, e^{-\vartheta r}\right)\, \big(\lambda+\delta^2\big)\,  \mathrm{dist}_{\mathcal{W}_T(\kappa)}(v,w) \\
 & \quad + \, c_4(\kappa)\,  \left(\max_{0\le r\le T} r^{2\rho}\, e^{-\vartheta r}\right) \, \|u^0\|_{H_\B^{4\xi}(I)}\, \mathrm{dist}_{\mathcal{W}_T(\kappa)}(v,w) \\
 & \quad +\, c_4(\kappa)\,\left(\max_{0\le r\le T} r^{1+\sigma-\theta}\, e^{-\vartheta r}\right) \, \big(\lambda+\delta^3\big)\, \mathrm{dist}_{\mathcal{W}_T(\kappa)}(v,w) \ .
\end{split}
\end{equation}
Now pick any $\lambda>0$ and take $\delta=\kappa^{-1}$. It follows from \eqref{122}-\eqref{12} and \eqref{44} that we may choose \mbox{$T=T(\kappa,\lambda)>0$} sufficiently small such that the mapping $\Lambda :\mathcal{W}_T(\kappa)\rightarrow \mathcal{W}_T(\kappa)$ defines a contraction for the distance $\mathrm{dist}_{\mathcal{W}_T(\kappa)}$ and thus has a unique fixed point $u$ in $\mathcal{W}_T(\kappa)$. Observe then that, owing to Proposition~\ref{L1}, Lemma~\ref{hh}, and Lemma~\ref{SG}, we have
$$
\big[ t \mapsto \big( A(u(t)), \lambda g(u(t)) + h(u(t)) \big) \big] \in C^\rho\big([0,T],\mathcal{H}(H_\B^{4}(I),L_2(I))\times H_\B^{4\sigma}(I)\big)
$$
with $\sigma>0$, and $u$ is a mild solution to \eqref{CP} on $[0,T]$ with $u^0\in H_\B^{4\xi}(I)$. Thus,
$$
u\in C^1\big((0,T],L_2(I)\big)\cap C\big((0,T], H_\B^4(I)\big) \cap C\big([0,T], H_\B^{4\xi}(I)\big)
$$
is a solution to \eqref{CP} by \cite[Theorem~10.1]{AmannTeubner}, which can be extended to some maximal interval $[0,T_m)$. Note that, when taking $\xi=1$, we obtain $u\in C\big([0,T_m), H_\B^{4}(I)\big)$ by a further use of \cite[Theorem~10.1]{AmannTeubner} and thus $u\in C^1\big([0,T_m),L_2(I)\big)$ by \eqref{CP}. This proves Proposition~\ref{Alocal} after setting $\psi(t,\cdot) := \psi_{u(t)}(\cdot) \in H^2(\Omega(u(t)))$ for $t\in [0,T_m)$, the latter being defined in Proposition~\ref{L1}. 
\end{proof}

We now supplement Proposition~\ref{Alocal} with a global existence result for small voltage values $\lambda$ and a criterion guaranteeing global existence.

\begin{cor}[{\bf Global Well-Posedness}]\label{Aglobalc}
Given $4\xi\in (7/2,4]$ and an initial condition $u^0\in  H_\B^{4\xi}(I)$ such that $u^0(x)>-1$ for $x\in I$, let $(u,\psi)$ be the solution to \eqref{u}-\eqref{psibc} on the maximal interval of existence $[0,T_m)$ provided by Proposition~\ref{Alocal}. Then, the following are true:

\begin{itemize}

\item[(i)] If, for each $T>0$, there is $\kappa(T)\in (0,1)$ such that $$\|u(t)\|_{H^{4\xi}(I)}\le \kappa(T)^{-1}\ ,\qquad u(t)\ge -1+\kappa(T)\ \text{ in } I$$ for $t\in [0,T_m)\cap [0,T]$, then the solution $(u,\psi)$ exists globally in time, that is, $T_m=\infty$.

\item[(ii)] Given $\kappa\in (0,1)$, there are $\lambda_*(\kappa)>0$ and $q(\kappa)>0$ such that the solution $(u,\psi)$ exists globally in time provided that $\lambda\in (0,\lambda_*(\kappa))$ and $\|u^0\|_{H^{4\xi}(I)}\le q(\kappa)$. Moreover, in this case there is $\rho>0$ such that  $u\in BUC^\rho([0,\infty),H_\B^{4\xi}(I))$ with $u(t)\ge -1+\kappa$ in $I$ for $t\ge 0$. 
\end{itemize}
\end{cor}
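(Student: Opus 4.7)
This is a standard consequence of Proposition~\ref{Alocal}. Assume by contradiction that $T_m < \infty$ and that the uniform bounds hold with $\kappa := \kappa(T_m+1) \in (0,1)$. Inspecting the proof of Proposition~\ref{Alocal}, the choice of $\theta, \rho, N$ and of the constants $c_2(\kappa), c_4(\kappa), m_*(\kappa), n_*(\kappa)$ depend only on $\kappa$, so the local time of existence $T(\kappa, \lambda)$ obtained from the contraction estimates \eqref{122}--\eqref{12} depends only on $\kappa$ and $\lambda$, not on the specific initial value in $\overline{S}_\xi(\kappa)$. Consequently, restarting Proposition~\ref{Alocal} at $t_0 \in (T_m - T(\kappa, \lambda), T_m)$ with initial value $u(t_0) \in \overline{S}_\xi(\kappa)$ yields, by uniqueness, an extension of $u$ past $T_m$, contradicting maximality.

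\textbf{Part (ii): global contraction on $[0,\infty)$.} The strategy is to rerun the fixed point argument of Proposition~\ref{Alocal} with $T = \infty$, exploiting the exponential decay $e^{-\vartheta t}$ provided by \eqref{thetabis}. Given $\kappa \in (0,1)$, I would first fix $4\theta \in (7/2, 4\xi)$, $\rho = (\xi-\theta)/2$, $4\sigma \in (0, 1/2)$, and $N > 0$ so that $\vartheta > 0$ in \eqref{thetabis}, all depending only on $\kappa$. The key observation is that $C_a := \sup_{r \ge 0} r^a e^{-\vartheta r}$ is finite for every $a \ge 0$, so the maxima in \eqref{122}, \eqref{122a}, \eqref{zui}, \eqref{12} remain bounded when $T$ is replaced by $\infty$ in the metric space $\mathcal{W}_T(\kappa, \delta)$. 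Next, I select $\delta \in (0, 1/\kappa]$ small enough that $c_4(\kappa)(C_{1-\theta}\,\delta^2 + C_{1+\sigma-\theta}\,\delta^3) \le 1/4$ and the $\delta^3$ contribution to \eqref{122a} is at most $\delta/4$. Finally, I choose $\lambda_*(\kappa) > 0$ and $q(\kappa) > 0$ so small that, for $\lambda \in (0, \lambda_*(\kappa))$ and $\|u^0\|_{H_\B^{4\xi}} \le q(\kappa)$, the bounds \eqref{122}, \eqref{122a}, \eqref{zui}, \eqref{12} yield, respectively,
\[
\|\Lambda(v)(t) - \Lambda(v)(s)\|_{H_\B^{4\theta}} \le \frac{N}{\ell(\kappa)}|t-s|^\rho,\quad \|\Lambda(v)(t)\|_{H_\B^{4\theta}} \le \delta,\quad \Lambda(v)(t) \ge -1 + \kappa,
\]
and a contraction constant below $1/2$. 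By the Banach fixed point theorem applied on $\mathcal{W}_\infty(\kappa, \delta)$, this produces a unique global solution $u$ with $u(t) \in \overline{S}_\theta(\kappa)$ for all $t \ge 0$.

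\textbf{Upgrading to $H_\B^{4\xi}$.} Membership in $\mathcal{W}_\infty(\kappa, \delta)$ delivers only Hölder continuity in $H_\B^{4\theta}$, so to conclude $u \in BUC^\rho([0,\infty), H_\B^{4\xi}(I))$ I would use Proposition~\ref{ES} once more on the mild representation of $u$: the smoothing estimate $\|U_{A(u)}(t, r)\|_{\mathcal{L}(H_\B^{4\sigma}, H_\B^{4\xi})} \le c_*(\kappa)(t-r)^{\sigma-\xi} e^{-\vartheta(t-r)}$, which is integrable in $r$ since $\xi - \sigma < 1$, combined with the uniform $H_\B^{4\sigma}$-bound on $\lambda g(u) + h(u)$ from Proposition~\ref{L1} and Lemma~\ref{hh}, first yields $u \in L_\infty((0,\infty), H_\B^{4\xi}(I))$. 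The Hölder estimate in $H_\B^{4\xi}$ then follows by reapplying \eqref{122} with $H_\B^{4\xi}$ in place of $H_\B^{4\theta}$ and using the just established uniform bound.

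\textbf{Main obstacle.} The delicate point is the ordering of the many constants: $\vartheta$ is determined by $N$ and $\kappa$ through \eqref{thetabis}, the $C_a$ are then fixed by $\vartheta$, $\delta$ is chosen so that the $\delta^2$ and $\delta^3$ contributions are absorbed, and finally $\lambda_*(\kappa)$ and $q(\kappa)$ must close all four required estimates simultaneously. In particular, the nontrivial consistency check is to verify that the right-hand side of \eqref{122} does not exceed the same value $N/\ell(\kappa)$ that was used to define $\mathcal{W}_\infty(\kappa,\delta)$ in the first place, which is precisely what smallness of $\lambda$ and $\|u^0\|_{H_\B^{4\xi}}$ secures.
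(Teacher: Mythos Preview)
Your proposal is correct and follows essentially the same strategy as the paper's own proof: part~(i) is the standard restart argument based on the fact that the local existence time $T(\kappa,\lambda)$ in the proof of Proposition~\ref{Alocal} depends only on $\kappa$ and $\lambda$, and part~(ii) reruns the fixed point argument in $\mathcal{W}_T(\kappa,\delta)$ uniformly in $T$ by exploiting $\vartheta>0$ so that the suprema $\sup_{r\ge 0} r^a e^{-\vartheta r}$ are finite. The paper phrases (ii) as ``$\Lambda$ is a contraction on $\mathcal{W}_T(\kappa,\delta)$ for every $T>0$'' rather than working directly on $\mathcal{W}_\infty(\kappa,\delta)$, but the two formulations are equivalent and the selection of $\delta$, $\lambda_*(\kappa)$, and $q(\kappa)$ you describe is exactly what makes \eqref{122}--\eqref{12} close uniformly in $T$.

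One remark: you are actually more careful than the paper about the final claim $u\in BUC^\rho([0,\infty),H_\B^{4\xi}(I))$. The paper simply writes ``By definition of $\mathcal{W}_T(\kappa,\delta)$, this implies~(ii)'', but $\mathcal{W}_T(\kappa,\delta)$ only controls $H_\B^{4\theta}$. Your upgrade via the smoothing estimate of Proposition~\ref{ES} (with $\alpha=\sigma$, $\beta=\xi$, noting $\xi-\sigma<1$ so the kernel is integrable) correctly yields $u\in L_\infty((0,\infty),H_\B^{4\xi}(I))$. The last step---recovering H\"older continuity in $H_\B^{4\xi}$ by ``reapplying \eqref{122} with $H_\B^{4\xi}$ in place of $H_\B^{4\theta}$''---is a bit telegraphic, since \eqref{122} as written trades regularity of the initial value against the target space; to make it precise you would restart from any $t_0>0$ using the just-obtained uniform $H_\B^{4\xi}$ bound and invoke \cite[II.Theorem~5.3.1]{LQPP} once more with a slightly different choice of exponents. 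This is a routine bookkeeping point and does not affect the correctness of the overall argument.
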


\begin{proof}
Since $T$ in the proof of Proposition~\ref{Alocal} depends only on $\kappa$ and $\lambda$, we readily obtain part~(i). It remains to prove part~(ii). To this end we note that, since the parameter $\vartheta$ defined in \eqref{thetabis} is positive, there are $\lambda_*(\kappa)>0$, $M(\kappa)>0$, and $\delta=\delta(\kappa)\le\kappa^{-1}$ sufficiently small such that \eqref{122}-\eqref{12} guarantee that the mapping $\Lambda :\mathcal{W}_T(\kappa,\delta)\rightarrow \mathcal{W}_T(\kappa,\delta)$ defines a contraction \emph{for each} $T>0$ provided that  $\lambda\in (0,\lambda_*(\kappa))$ and $u^0\in H_\B^{4\xi}(I)$ with $u^0\ge -1+2\kappa$ in $I$ and  $\|u^0\|_{H_\B^{4\xi}(I)}\le M(\kappa)$. Thus, in this case as well there is a unique fixed point $u$ of $\Lambda$ belonging to $\mathcal{W}_T(\kappa,\delta)$ for each $T>0$. By definition of $\mathcal{W}_T(\kappa,\delta)$, this implies (ii).
\end{proof}

Combining Proposition~\ref{Alocal} and Corollary~\ref{Aglobalc} (with $\xi=1$) we deduce Theorem~\ref{Aquasi}.  We end this section with an immediate consequence of Proposition~\ref{L1}, the uniqueness statement of Proposition~\ref{Alocal}, and the invariance of the equations with respect to the symmetry $(x,z)\to (-x,z)$.

\begin{cor}\label{Asim} 
If $u^0=u^0(x)$ in Theorem~\ref{Alocal}
is even with respect to $x\in I$, then, {for all $t\in [0,T_m)$}, $u=u(t,x)$ and \mbox{$\psi=\psi(t,x,z)$} are even with respect to $x\in I$ as well.
\end{cor}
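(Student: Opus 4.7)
The plan is to exploit the invariance of \eqref{u}-\eqref{psibc} under the reflection $x\mapsto -x$ and invoke the uniqueness assertion of Proposition~\ref{Alocal}. Given the solution $(u,\psi)$ on $[0,T_m)$ emanating from the even initial datum $u^0$, I would set
\begin{equation*}
\tilde u(t,x) := u(t,-x), \qquad \tilde\psi(t,x,z) := \psi(t,-x,z),
\end{equation*}
for admissible arguments, observing that $\Omega(\tilde u(t))$ is the image of $\Omega(u(t))$ under $(x,z)\mapsto(-x,z)$ whenever $u(t,\cdot)$ is replaced with its reflection. The strategy is to show that $(\tilde u,\tilde\psi)$ solves the same system with the same initial data and then conclude by uniqueness.

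First I would verify the reflection invariance of the problem. The clamped boundary conditions \eqref{bcu}, the initial condition \eqref{ic}, and the harmonic problem \eqref{psi}-\eqref{psibc} are manifestly invariant under $(x,z)\mapsto(-x,z)$. For the evolution equation \eqref{u} with $\gamma=0$, I would note that whenever $w$ is even in $x$, its derivative $\partial_x w$ is odd, $(\partial_x w)^2$ and $\partial_x^2 w$ are even, and each of the three quotients appearing in \eqref{AA} has an even denominator with an even (respectively, odd) numerator in the right combination so that $\mk(w)$ ends up even. On the right-hand side, Proposition~\ref{L1} asserts directly that $g(w)$ is even when $w$ is, i.e.\ that $\ve^2|\partial_x\psi_w(\cdot,w)|^2+|\partial_z\psi_w(\cdot,w)|^2$ is even. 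Hence $(\tilde u,\tilde\psi)$ lies in the same regularity class as $(u,\psi)$ and satisfies \eqref{u}-\eqref{psibc} with the same voltage $\lambda$.

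Since $u^0$ is even, $\tilde u(0,x)=u^0(-x)=u^0(x)$, so $\tilde u$ and $u$ share the initial value, and the uniqueness part of Proposition~\ref{Alocal} forces $\tilde u=u$ on $[0,T_m)\times I$, i.e.\ $u(t,\cdot)$ is even for every $t\in[0,T_m)$. Once this is known, for each such $t$ the domain $\Omega(u(t))$ and the Dirichlet datum $(1+z)/(1+u(t,\cdot))$ in \eqref{psibc} are symmetric under reflection, so the uniqueness in Proposition~\ref{L1} implies that $\psi(t,\cdot,\cdot)$ is even in $x$ as well. No real obstacle arises in this argument; the only point requiring attention is the parity bookkeeping for $\mk$, which is routine once one notices that differentiating an even function in $x$ toggles parity.
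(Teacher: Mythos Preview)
Your argument is correct and is exactly the approach the paper takes: the corollary is stated as an immediate consequence of the reflection invariance of the equations, the uniqueness in Proposition~\ref{Alocal}, and Proposition~\ref{L1}. You have simply spelled out in more detail what the paper leaves implicit.
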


\section{Steady-State Solutions: Proof of Theorem~\ref{TStable}}\label{s5}

We now give a more precise statement of the existence of steady-state solutions. The proof of the following proposition relies on the implicit function theorem and the principle of linearized stability as well as on the already established regularity of the nonlinearities. 

\begin{prop}[{\bf Steady-State Solutions}]\label{TStable}
Let $\kappa\in (0,1)$. There is $\delta=\delta(\kappa)\in (0,1/\ve]$ small enough such that:

\begin{itemize}

\item[(i)] There is an analytic function $[\lambda\mapsto U_\lambda]:[0,\delta(\kappa))\rightarrow H_\B^4(I)$ such that $(U_\lambda,\Psi_\lambda)$ is the unique steady state to
\eqref{u}-\eqref{psibc} satisfying 
$$
\|U_\lambda\|_{H_\B^4(I)}\le 1/\kappa \;\;\text{ with }\;\; -1+\kappa\le U_\lambda\le 0 \;\text{ in }\; I\ ,
$$ 
and $\Psi_\lambda\in H^2(\Omega(U_\lambda))$ when $\lambda\in (0,\delta)$. Moreover, $U_\lambda$ is even with $U_0=0$ and $\Psi_\lambda=\Psi_\lambda(x,z)$ is even with respect to $x\in I$.

\item[(ii)] Let $\lambda\in (0,\delta(\kappa))$. There are $\omega_0,m,R>0$ such that for each initial condition \mbox{$u^0\in H_\B^4(I)$} satisfying 
$$
\|u^0-U_\lambda\|_{H_\B^4(I)} <m\ ,
$$  
there is a unique global solution $(u,\psi)$ to \eqref{u}-\eqref{psibc} with
$$
u\in C^1\big([0,\infty),L_2(I)\big)\cap C\big([0,\infty), H_\B^4(I)\big)\ ,\qquad \psi(t)\in H^2\big(\Omega(u(t))\big)\ ,\quad t\ge 0\ ,
$$
and
$u(t)>-1$ in $I$ for each $t\ge 0$. Moreover,
\begin{equation}\label{est}
\|u(t)-U_\lambda\|_{H_\B^4(I)}+\|\partial_t u(t)\|_{L_{2}(I)} \le R e^{-\omega_0 t} \|u^0-U_\lambda\|_{H_\B^4(I)}\ ,\quad t\ge 0\ .
\end{equation}
\end{itemize}
\end{prop}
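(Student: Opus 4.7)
The plan is to recast the steady-state version of \eqref{CP} as a nonlinear equation $F(\lambda,u)=0$ in a neighborhood of the trivial solution $(0,0)$ and to apply the analytic implicit function theorem; the stability assertion then follows from the principle of linearized stability in the quasilinear parabolic framework already set up in Section~\ref{s4}.

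Concretely, introduce $F(\lambda,u):=A(u)u+h(u)+\lambda\,g(u)$ on $\mathbb{R}\times V$, where $V$ is an open neighborhood of $0$ in $H_D^4(I)$ consisting of functions with $u>-1$ on $I$. By the form \eqref{AAa} of $A$, Proposition~\ref{L1}, and Lemma~\ref{hh}, $F$ is real analytic, and trivially $F(0,0)=0$. Since the $u$-dependent prefactors in $A(u)u$ enter only through $1+\varepsilon^2(\partial_x u)^2-1$ (which is quadratic in $\partial_x u$) and $h$ is cubic in the derivatives of $u$, the Fréchet derivative at the origin is
\[ D_u F(0,0)\,v=\beta\,\partial_x^4 v-\tau\,\partial_x^2 v=:L_0 v. \]
Lemma~\ref{SG} applied with $w=0$ yields $L_0\in\mathrm{Isom}(H_D^4(I),L_2(I))$, so the analytic IFT produces a unique analytic branch $\lambda\mapsto U_\lambda\in H_D^4(I)$ on some interval $[0,\delta_0)$ with $U_0=0$, and Proposition~\ref{L1} simultaneously furnishes the potential $\Psi_\lambda\in H^2(\Omega(U_\lambda))$. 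Reducing $\delta_0$ if necessary, the continuity of the branch yields $\|U_\lambda\|_{H_D^4(I)}\le 1/\kappa$ and $U_\lambda\ge -1+\kappa$ in $I$.

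The upper bound $U_\lambda\le 0$ is obtained from the first-order expansion $U_\lambda=-\lambda L_0^{-1}g(0)+O(\lambda^2)$ in $H_D^4(I)$: from \eqref{psi0} one has $g(0)\equiv 1$, and the one-dimensional clamped operator $L_0$ has a strictly positive Green's function (standard for $\beta\partial_x^4-\tau\partial_x^2$ with clamped boundary conditions on a finite interval), so $L_0^{-1}(1)>0$ on $I$, whence $U_\lambda\le 0$ after a further shrinking of $\delta_0$. Evenness of $U_\lambda$ follows from the uniqueness clause of the IFT together with the invariance of $A$, $h$, and $g$ (for the latter, see Proposition~\ref{L1}) under the reflection $x\mapsto -x$; evenness of $\Psi_\lambda$ is then a consequence of the uniqueness statement in Proposition~\ref{L1}.

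For part (ii), set $\mathbb{A}_\lambda:=D_u F(\lambda,U_\lambda)\in\mathcal{L}(H_D^4(I),L_2(I))$. Since $\mathbb{A}_0=L_0$ is self-adjoint and strictly positive on $L_2(I)$, its spectrum lies in $[\mu_1^\star,\infty)$ with $\mu_1^\star>0$; and since $\lambda\mapsto\mathbb{A}_\lambda$ is continuous in operator norm by Proposition~\ref{L1}, Lemma~\ref{hh}, and the Lipschitz estimate \eqref{esti} of Lemma~\ref{SG}, a standard spectral perturbation argument gives $\sigma(\mathbb{A}_\lambda)\subset\{\mathrm{Re}\,\mu\ge 2\omega_0\}$ for some $\omega_0>0$, after shrinking $\delta$ once more. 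Writing $v:=u-U_\lambda$ reformulates \eqref{CP} as
\[ \partial_t v+\mathbb{A}_\lambda v=\Phi_\lambda(v), \]
where $\Phi_\lambda$ is analytic with $\Phi_\lambda(0)=0$ and $D\Phi_\lambda(0)=0$, so $\|\Phi_\lambda(v)\|_{L_2(I)}=O(\|v\|_{H_D^4(I)}^2)$. The principle of linearized stability for quasilinear parabolic problems in the setting of Lemma~\ref{SG} and Proposition~\ref{ES} (see, e.g., \cite{LQPP}) then yields constants $m,R>0$ such that, whenever $\|u^0-U_\lambda\|_{H_D^4(I)}<m$, the solution furnished by Proposition~\ref{Alocal} is global and satisfies $\|u(t)-U_\lambda\|_{H_D^4(I)}\le Re^{-\omega_0 t}\|u^0-U_\lambda\|_{H_D^4(I)}$; the $\|\partial_t u(t)\|_{L_2(I)}$-bound in \eqref{est} is recovered from the equation itself, $\partial_t u=-\mathbb{A}_\lambda v+\Phi_\lambda(v)$, combined with the already established $H_D^4$-decay. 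The main delicate point in the whole argument is the transfer of the spectral gap of $L_0$ to $\mathbb{A}_\lambda$ in the quasilinear setting, for which the uniform sectoriality furnished by Lemma~\ref{SG} and the Lipschitz estimate \eqref{esti} are essential.
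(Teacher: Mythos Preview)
Your overall strategy matches the paper's: apply the analytic implicit function theorem at $(\lambda,u)=(0,0)$ for part~(i), then invoke the principle of linearized stability for part~(ii). The paper sets this up slightly differently, writing $F(\lambda,v)=v+A(v)^{-1}(\lambda g(v)+h(v))$ as a map into $H_D^4(I)$ so that $D_vF(0,0)=\mathrm{id}$, whereas you work with $F(\lambda,u)=A(u)u+h(u)+\lambda g(u)$ into $L_2(I)$ and identify $D_uF(0,0)=L_0$; both are fine. For stability, the paper decomposes the linearization as $A(U_\lambda)+B_\lambda$ with $\|B_\lambda\|\to 0$ and uses the uniform spectral bound for $A(U_\lambda)$ from Lemma~\ref{SG} directly (citing \cite[Theorem~9.1.2]{Lunardi}), while you perturb from $\mathbb{A}_0=L_0$; again, equivalent in substance.

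There is, however, a genuine gap in your argument for $U_\lambda\le 0$. Writing $U_\lambda=-\lambda L_0^{-1}(1)+O(\lambda^2)$ in $H_D^4(I)$ and knowing $L_0^{-1}(1)>0$ on the open interval $I$ does \emph{not} by itself yield $U_\lambda\le 0$ for small $\lambda$: both the leading term $-\lambda L_0^{-1}(1)$ and the $O(\lambda^2)$ remainder vanish to second order at $x=\pm 1$ (they lie in $H_D^4(I)$), so near the boundary the sign of their sum is not determined by the first-order expansion alone. To close this you would need a Hopf-type statement that $\partial_x^2\big(L_0^{-1}(1)\big)(\pm 1)\ne 0$, and then compare second-order Taylor coefficients. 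The paper avoids this altogether: it observes that the steady-state equation reads $\mathcal{K}(U_\lambda)=-\lambda g(U_\lambda)$ with $g(U_\lambda)\ge 0$, and invokes the sign-preserving property for clamped fourth-order operators established in \cite[Section~4.2]{LaurencotWalker_JAM} to conclude $U_\lambda\le 0$ directly from the full nonlinear equation. That is both cleaner and what you should cite here.
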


\begin{proof}
Since the operator $A(v)\in\mathcal{L}(H_\B^4(I),L_2(I))$ defined in \eqref{AAa} is invertible for $v\in S_1(\kappa)$ according to Lemma~\ref{SG}, we may define the mapping
$$
F:\R\times S_1(\kappa)\rightarrow H_\B^4(I)\ ,\quad (\lambda,v)\mapsto v + A(v)^{-1}(\lambda g(v)+h(v))\ .
$$
Then $F$ is analytic by Proposition~\ref{L1} and Lemma~\ref{hh} with $F(0,0)=0$. Moreover, since the Fr\'echet derivative of $h$ vanishes at zero, we have
 $D_vF(0,0) =\mathrm{id}_{H_\B^4(I)}$. Now, the implicit function theorem ensures the existence of $\delta_1 = \delta_1(\kappa) \in (0,1/\ve]$ and an analytic function 
$$
[\lambda\mapsto U_\lambda]: (-\delta_1,\delta_1)\rightarrow H_\B^4(I)
$$ 
such that $F(\lambda,U_\lambda)=0$ for $\lambda\in [0,\delta_1)$. Introducing $\Psi_\lambda := \psi_{U_\lambda}\in H^2(\Omega(U_\lambda))$ according to Proposition~\ref{L1}, $(U_\lambda,\Psi_\lambda)$ is the unique steady state to \eqref{u}-\eqref{psibc} satisfying $U_\lambda\in S_1(\kappa)$. The non-positivity of $U_\lambda$ readily follows from \cite[Section~4.2]{LaurencotWalker_JAM} and the fact that $g(U_\lambda)\ge 0$. That $U_\lambda=U_\lambda(x)$ and $\Psi_\lambda=\Psi_\lambda(x,z)$ is even in $x\in I$ are immediate consequences of uniqueness and the invariance of the equations with respect to the symmetry $(x,z)\to (-x,z)$. This proves Proposition~\ref{TStable}(i).  

\medskip

It remains to prove the asymptotic stability of the steady states $(U_\lambda,\Psi_\lambda)$ as claimed in Proposition~\ref{TStable}(ii). We proceed similarly as in \cite[Theorem~3(ii)]{ELW1} by using the principle of linearized stability.  Let $\lambda\in (0,\delta_1(\kappa))$ and define $Q\in C^\infty(S_1(\kappa),L_2(I))$ by
$$
Q(u):=-A(u)u-\lambda g(u)-h(u)\ ,
$$
which satisfies in particular $Q(U_\lambda)=0$. The linearization of \eqref{CP} around $U_\lambda$ then reads
$$
\frac{\rd}{\rd t}v-D_uQ(U_\lambda)[v]=G_\lambda(v) := Q(v+U_\lambda)-D_uQ(U_\lambda)[v]\ ,
$$
so that $u=v+U_\lambda$ solves \eqref{CP}. According to Proposition~\ref{L1}, the map $G_\lambda\in C^\infty(O_\lambda,L_2(I))$ is defined on some open neighborhood $O_\lambda$ of zero in $H_\B^4(I)$ such that $U_\lambda+O_\lambda\subset S_1(\kappa)$. In view of \eqref{CP}  we obtain
\begin{equation}\label{lin}
\frac{\rd}{\rd t} v + \big( A(U_\lambda) + B_\lambda \big) v = G_\lambda(v)\ ,
\end{equation}
where
\begin{equation*}
\begin{split}
B_\lambda v:&=\ \lambda\, D_ug(U_\lambda) [v] + D_u A(U_\lambda) [v] U_\lambda + D_u h(U_\lambda) [v]\,,\quad v\in H_\B^4(I)\,,
\end{split}
\end{equation*}
satisfies
$$
\|B_\lambda\|_{\mathcal{L}(H_\B^4(I),L_2(I))}\le c(\kappa)\,\big(\lambda+\|U_\lambda\|_{H_\B^4(I)}^2\big)
$$
for some $c(\kappa)>0$ independent of $\lambda$ small, say, $\lambda\in [0,\delta_1/2]$. The continuity of $\lambda\mapsto U_\lambda$ then implies 
$$
\|B_\lambda\|_{\mathcal{L}(H_\B^4(I),L_2(I))}
\rightarrow 0\quad \text{as}\quad \lambda\rightarrow 0\ .
$$
Since $U_\lambda\in S_1(\kappa)$, we have $-2\omega(\kappa) + A(U_\lambda)\in \mathcal{H}(H_\B^4(I),L_2(I); k(\kappa),\omega(\kappa))$ by Lemma~\ref{SG}. In particular, $-A(U_\lambda)$ is the generator of an analytic semigroup on $L_2(I)$ with spectral bound not exceeding \mbox{$-\omega(\kappa)<0$}.
 Consequently, it follows from \cite[I.Proposition~1.4.2]{LQPP} that $-(A(U_\lambda)+B_\lambda)$ is the generator of an analytic semigroup on $L_2(I)$ and there is $\omega_1=\omega_1(\kappa)>0$ such that the complex half plane  $\{ z\in\C\ ;\ \mathrm{Re}\, z\ge -\omega_1\}$ is included in the resolvent set of $-(A(U_\lambda)+B_\lambda)$ provided that $\lambda>0$ is sufficiently small. 
Now we may apply \cite[Theorem~9.1.2]{Lunardi} and conclude that the statement~(ii) of Proposition~\ref{TStable} holds true for $\lambda\in (0,\delta(\kappa))$ for some $\delta(\kappa)>0$ possibly smaller than $\delta_1$.
\end{proof}

\section*{Acknowledgments}

The work of Ph.L. was partially supported by the CIMI (Centre International de Ma\-th\'e\-ma\-ti\-ques et d'Informatique) Excellence program and by the Deutscher Akademischer Austausch Dienst (DAAD) while enjoying the hospitality of the Institut f\"ur Angewandte Mathematik, Leibniz Universit\"at Hannover.




\begin{thebibliography}{1}

\bibitem{AmannMultiplication}
H.~Amann. \textit{Multiplication in Sobolev and Besov spaces.} In {\em Nonlinear analysis}, Scuola Norm. Sup. di Pisa Quaderni, 27--50, Scuola Norm. Sup., Pisa, 1991.

\bibitem{AmannTeubner}
H.~Amann. \textit{Nonhomogeneous Linear and Quasilinear Elliptic and Parabolic Boundary Value Problems.} In {\em Function Spaces, Differential Operators and Nonlinear Analysis}, H.~Schmeisser, H.~Triebel (eds.), Teubner-Texte zur Math. {\bf 133}, 9--126, Teubner, Stuttgart, Leipzig, 1993.

\bibitem{LQPP}
H.~Amann. \textit{Linear and Quasilinear Parabolic Problems,
Volume~I: Abstract Linear Theory.} Birkh\"auser, Basel, Boston, Berlin, 1995.

\bibitem{BrubakerPelesko_EJAM}
N.~Brubaker and J.~Pelesko. \textit{Non-linear effects on canonical MEMS models.} European J. Appl. Math {\bf 22} (2011), 455--470.

\bibitem{CFT}
D. Cassani, L. Fattorusso, and A. Tarsia. \textit{Nonlocal dynamic problems with singular nonlinearities and applications to MEMS.} Preprint (2013). 

\bibitem{Ci07}
G.~Cimatti. \textit{A free boundary problem in the theory of electrically actuated microdevices.} Appl. Math. Lett. \textbf{20} (2007), 1232--1236. 

\bibitem{ELW1}
J.~Escher, Ph.~Lauren\c{c}ot, and Ch.~Walker. \textit{A parabolic free boundary problem modeling electrostatic MEMS.} Arch. Ration. Mech. Anal., to appear.

\bibitem{ELW2}
J.~Escher, Ph.~Lauren\c{c}ot, and Ch.~Walker. \textit{Dynamics of a free boundary problem with curvature modeling electrostatic MEMS.} Preprint (2013).

\bibitem{ELW3}
J.~Escher, Ph.~Lauren\c{c}ot, and Ch.~Walker. \textit{Finite time singularity in a free boundary problem modeling MEMS.} Preprint  (2013).

\bibitem{EspositoGhoussoubGuo}
P.~Esposito, N.~Ghoussoub, and Y.~Guo. \textit{Mathematical Analysis of Partial Differential Equations Modeling Electrostatic MEMS.} Courant Lecture Notes in Mathematics \textbf{20}, Courant Institute of Mathematical Sciences, New York, 2010.

\bibitem{Grisvard}
P.~Grisvard. \textit{Elliptic Problems in Nonsmooth Domains.} Monographs and Studies in Mathematics {\bf 24}, Pitman (Advanced Publishing Program), Boston, MA, 1985.

\bibitem{GuidettiMathZ}
D.~Guidetti. \textit{On interpolation with boundary conditions.}
Math. Z. \textbf{207} (1991), 439--460. 

\bibitem{Gu10}
Y.~Guo. \textit{Dynamical solutions of singular wave equations modeling electrostatic MEMS.} SIAM J. Appl. Dyn. Syst. \textbf{9} (2010), 1135--1163.

\bibitem{GW09}
Z.~Guo and J.~Wei. \textit{On a fourth-order nonlinear elliptic equation with negative exponent.} SIAM J. Math. Anal. \textbf{40} (2009), 2034--2054.

\bibitem{HP05}
A.~Henrot and M.~Pierre. \textit{Variation et Optimisation de Formes.} Math\'ematiques $\&$ Applications (Berlin) {\bf 48} Springer, Berlin, 2005.

\bibitem{KLNT11}
N.I.~Kavallaris, A.A.~Lacey, C.V.~Nikolopoulos, and D.E.~Tzanetis. \textit{A hyperbolic non-local problem modelling MEMS technology.} Rocky Mountain J. Math. \textbf{41} (2011), 505--534.

\bibitem{LaurencotWalker_ARMA}
Ph.~Lauren\c{c}ot and Ch.~Walker. \textit{A stationary free boundary problem modeling electrostatic MEMS.} Arch. Ration. Mech. Anal.  {\bf 207} (2013), 139--158. 

\bibitem{LW_AnalPDE}
Ph.~Lauren\c{c}ot and Ch.~Walker. \textit{A free boundary problem modeling electrostatic MEMS:\ I. Linear bending effects.}
Preprint (2013).

\bibitem{LW_AHP}
Ph.~Lauren\c{c}ot and Ch.~Walker. \textit{A fourth-order model for MEMS with clamped boundary conditions.} Preprint (2013).

\bibitem{LaurencotWalker_JAM}
Ph.~Lauren\c{c}ot and Ch.~Walker. \textit{Sign-preserving property for some fourth-order elliptic operators in one dimension or in radial symmetry.} J. Anal. Math., to appear.

\bibitem{LinYang}
F.~Lin and Y.~Yang. \textit{Nonlinear non-local elliptic equation modelling electrostatic actuation.} Proc. R. Soc. Lond. Ser. A Math. Phys. Eng. Sci. \textbf{463} (2007), 1323--1337.
 
\bibitem{LL12}
A.E.~Lindsay and J.~Lega. \textit{Multiple quenching solutions of a fourth order parabolic PDE with a singular nonlinearity modeling a MEMS capacitor.} SIAM J. Appl. Math. \textbf{72} (2012), 935--958.

\bibitem{Lunardi}
A.~Lunardi. \textit{Analytic Semigroups and Optimal Regularity in Parabolic Problems.} Progress in Nonlinear Differential Equations and their Applications {\bf 16}, Birkh\"auser Verlag, Basel, 1995.

\bibitem{Necas67}
J.~Ne{\v{c}}as. \textit{Les M\'ethodes Directes en Th\'eorie des Equations Elliptiques.} Masson et Cie, Paris, 1967.

\bibitem{P02}
J.A.~Pelesko. \textit{Mathematical modeling of electrostatic MEMS with tailored dielectric properties.} SIAM J. Appl. Math. \textbf{62} (2001), 888--908. 

\bibitem{BP03}
J.A.~Pelesko and D.H.~Bernstein. \textit{Modeling MEMS and NEMS.} Chapman \& Hall/CRC, Boca Raton, FL, 2003.

\bibitem{PT01}
J.A.~Pelesko and A.A.~Triolo. \textit{Nonlocal problems in MEMS device control.} J. Engrg. Math. \textbf{41} (2001), 345--366.

\bibitem {Triebel}
H.~Triebel. \textit{Interpolation Theory, Function Spaces,
Differential Operators.} Second edition, Johann Ambrosius Barth, Heidelberg, Leipzig, 1995.

\end{thebibliography}
\end{document}